\newtheorem{theorem}{Theorem}
\newtheorem{corollary}{Corollary}
\newtheorem{lemma}{Lemma}
\newtheorem{proposition}{Proposition}
\newcommand{\Prb}{\mathbb{P}}
\newcommand{\Exp}{\mathbb{E}}
\title{Concentration inequalities for the sum in sampling without replacement: an approach via majorization}
\author{
Jianhang Ai\thanks{Faculty of Electrical Engineering, Czech Technical University, Karlovo N\'am\v{e}st\'i 13, 12135, Prague 2, Czech Republic, e-mail: ai.jianhang@fel.cvut.cz}
\and
Ond\v{r}ej Ku\v{z}elka\thanks{Faculty of Electrical Engineering, Czech Technical University, Karlovo N\'am\v{e}st\'i 13, 12135, Prague 2, Czech Republic, e-mail: ondrej.kuzelka@fel.cvut.cz}
\and
Christos Pelekis\thanks{Aristotle University of Thessaloniki, 
Department of Mathematics, 
541 24 Thessaloniki, Greece, e-mail: pelekis.chr@gmail.com, \, cpelekis@math.auth.gr }
}
\begin{document}

\maketitle

\begin{abstract}  
Let $P=(x_1,\ldots,x_n)$ be a population consisting of $n\ge 2$ real numbers whose sum is zero, and let $k <n$ be a   positive integer. We sample $k$ elements from $P$ without replacement and denote by $X_P$ the sum of the elements in our sample. In this article, using  ideas from the theory of majorization, we deduce non-asymptotic lower and upper bounds on the probability that $X_P$ exceeds its expected value.
\end{abstract}

\noindent
\emph{Keywords}: probability inequalities; sampling without replacement;  majorization; non-negative $k$-sums;
Schur convexity; hypergeometric distribution; mean absolute deviation

\noindent
\emph{MSC (2020)}: 62D05; 60E15; 26B25

\section{Introduction}
\subsection{Notation}
Let us begin with introducing some notation that will be fixed throughout the text. 
Given a positive integer $n$, we denote by $[n]$ the set $\{1,\ldots,n\}$. 
The cardinality of a finite set $F$ is denoted by $|F|$. 
Given two positive integers $n,k$  such that $1\le k < n$, we denote by $\binom{[n]}{k}$  the collection consisting of all subsets $F$ of $[n]$  that satisfy $|F|=k$. 

In this article, we shall be concerned with estimates on the probability that the sum in sampling without replacement deviates from its expected value. 
Our approach is based on the notion of \emph{majorization of vectors}, which we briefly recall here and refer the reader to~\cite{Marshall_Olkin_Arnold} for further details and references. 
If $\mathbf{x}= (x_1,\ldots,x_n)\in\mathbb{R}^n$ is a vector,  let $x_{[1]} \ge \cdots \ge x_{[n]}$ denote the coordinates of $\mathbf{x}$ in decreasing order. Given two vectors $\mathbf{x}= (x_1,\ldots,x_n)$ and $\mathbf{y}= (y_1,\ldots,y_n)$  in $\mathbb{R}^n$, we say that $\mathbf{y}$ \textit{dominates} $\mathbf{x}$ (or that $\mathbf{x}$ \textit{is majorized} 
by $\mathbf{y}$) if it holds 
\[
\sum_{j=1}^{\ell} x_{[j]} \le \sum_{j=1}^{\ell} y_{[j]}\, , \,  \text{ for all } \, \ell=1,\ldots,n-1 \, , \quad \text{ and } \quad \sum_{j=1}^{n} x_{[j]} = \sum_{j=1}^{n} y_{[j]} \, . 
\]
We write $\mathbf{x}\prec\mathbf{y}$ when the vector  $\mathbf{x}$  is majorized by the vector $\mathbf{y}$. 
A function $f:A\to \mathbb{R}$ defined on a set $A\subseteq\mathbb{R}^n$ is said to be 
\emph{Schur convex on $A$} if for every  $\mathbf{x},\mathbf{y}\in A$ for which  $\mathbf{x}\prec \mathbf{y}$ we have $f(\mathbf{x})\le f(\mathbf{y})$.

Given two random variables $X,Y$, we write $X\sim Y$ when $X$ and $Y$ have the same distribution.
Given $i\in [n-1]$, we denote by $\text{Hyp}(n,i,k)$ 
a hypergeometric random variable that counts the number of black marbles in a sample without replacement of size $k$ from an urn that contains $i$ black and $n-i$ white marbles. Hence, for $H_i\sim\text{Hyp}(n,i,k)$, we have $\Prb(H_i=m)=\frac{\binom{i}{m}\cdot \binom{n-i}{m}}{\binom{n}{k}}$, for $m\in \{0,1,\ldots,\min\{i,k\}\}$. 
 
\subsection{Related work and main results}
Let $P=(x_1,\ldots,x_n)$ be a population consisting of $n\ge 2$ real numbers such that $\sum_{i=1}^{n}x_i=0$, and let $k<n$ be a positive integer. We sample $k$ elements from $P$ without replacement and consider the sum of the elements in our sample. 
More concretely, choose a set $\mathbb{I}\in\binom{[n]}{k}$ uniformly at random and let
\[
X_P = \sum_{i\in\mathbb{I}} x_i \, .
\]
How much does $X_P$ fluctuate from its expected value? 
A bit more precisely, let $t\ge 0$ be fixed and observe that $\Exp(X_P)=0$. What is an upper and a lower bound on the tail probability $\Prb(X_P \ge t)$?

The problem is of fundamental interest, both in theory as well as in applications, and has attracted  quite a bit of attention, although not on a scale one would expect.  Probably the first systematic approach to this problem was reported by Hoeffding in his seminal paper~\cite{Hoeffding} who, by employing his well-known result on the convex-ordering-dominance on  the sum in sampling without replacement by the sum in sampling with replacement, established the following. 

\begin{theorem}[Hoeffding~\cite{Hoeffding}]
\label{thm_Hoeffding}
Let $P=(x_1,\ldots,x_n)$ be a population of size $n\ge 2$ such that $\sum_{i=1}^{n} x_i=0$. 
Let $X_P$ denote the sum of $k\in [n-1]$ elements that are sampled  without replacement from $P$. If $a = \min_{i \in [n]} x_i$ and $b = \max_{i\in [n]} x_i$, then 
\begin{equation*}
\Prb(X_P \ge t) \le e^{-\frac{2t^2}{k(b-a)^2}} \, , \, \text{ for } \, t > 0 \, ,
\end{equation*}  
\end{theorem}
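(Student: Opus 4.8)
The plan is to follow the classical exponential‑moment (Chernoff) route, with Hoeffding's convex‑ordering dominance as the one substantive input. First I would fix $\lambda>0$ and apply Markov's inequality to the nonnegative random variable $e^{\lambda X_P}$, obtaining
\[
\Prb(X_P\ge t)\le e^{-\lambda t}\,\Exp\!\left[e^{\lambda X_P}\right].
\]
The problem is thereby reduced to bounding the moment generating function of $X_P$.

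The second step is where the only real idea enters. I would invoke Hoeffding's result that the sum $X_P$ in sampling \emph{without} replacement is dominated in the convex order by the sum $Y_P:=Z_1+\cdots+Z_k$ obtained by sampling $k$ elements of $P$ \emph{with} replacement, where $Z_1,\ldots,Z_k$ are i.i.d.\ and uniform on $\{x_1,\ldots,x_n\}$. Since $z\mapsto e^{\lambda z}$ is convex, this gives $\Exp[e^{\lambda X_P}]\le \Exp[e^{\lambda Y_P}]$, and by independence of the $Z_j$ we have $\Exp[e^{\lambda Y_P}]=\bigl(\Exp[e^{\lambda Z_1}]\bigr)^{k}$.

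Third, I would estimate the single‑draw factor $\Exp[e^{\lambda Z_1}]$. Since $\sum_i x_i=0$ we have $\Exp[Z_1]=0$, and by hypothesis $a\le Z_1\le b$ almost surely; Hoeffding's lemma for bounded, centered random variables then yields $\Exp[e^{\lambda Z_1}]\le e^{\lambda^2(b-a)^2/8}$. Combining the three steps gives
\[
\Prb(X_P\ge t)\le \exp\!\left(-\lambda t+\tfrac{1}{8}\,k\lambda^2(b-a)^2\right),
\]
and I would finish by optimizing over $\lambda>0$: the right‑hand side is minimized at $\lambda=\frac{4t}{k(b-a)^2}$, and substituting this value collapses the exponent to $-\frac{2t^2}{k(b-a)^2}$, which is the claimed bound.

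The main obstacle is the convex‑ordering dominance of the without‑replacement sum by the with‑replacement sum; this is the genuine technical content, and I would cite it from~\cite{Hoeffding} rather than reprove it. A self‑contained derivation would realize both sums on a common probability space (draw a uniformly random ordering of $P$, then resample $k$ indices with replacement for $Y_P$) and push a conditional Jensen inequality through, but that is precisely the heart of Hoeffding's argument and I would take it as given here. Everything else — Markov's inequality, the factorization over i.i.d.\ summands, Hoeffding's lemma, and the one‑variable optimization — is routine.
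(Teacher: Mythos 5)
Your proposal is correct, and it is precisely the route the paper attributes to Hoeffding: the paper does not reprove this theorem but cites it, noting that it follows from the convex-ordering dominance of the without-replacement sum by the with-replacement sum, which is exactly the key step you invoke before running the standard Chernoff--Hoeffding-lemma optimization. The computation of the optimal $\lambda=\frac{4t}{k(b-a)^2}$ and the resulting exponent check out.
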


Hoeffding's bound has been refined and/or reproved  in~\cite{Serfling, McDiarmid, Bardenet_Maillard}, possibly among  others, but the results on the topic appear to be scarce.  
Notice that the bound requires information on the \emph{mean} and the \emph{extreme values} (i.e., the parameters $a,b$) of the population. Let us also remark that  the bound can be improved further under additional information on the population \emph{variance} (see~\cite{Bardenet_Maillard}, or Theorem~\ref{thms} below).
Note also that Hoeffding's bound refers to the case $t>0$ in the aforementioned question, and that it becomes trivial  when $t=0$; a fact  that appears to hold true for the majority of the existing  results regarding upper bounds. In particular, we were unable to find non-asymptotic upper, or lower, bounds on the tail probability   $\Prb(X_P\ge 0)$ that are valid for all values of $n\ge 2$ and $k\in [n-1]$.   

Rather surprisingly, 
the literature dealing with lower bounds on the tail  probability  $\Prb(X_P\ge t)$ appears to be richer. While the literature dealing with the case $t>0$ seems to be non-existent, the case $t=0$ has attracted an immense attention due to its relevance to a notorious problem in combinatorics which has been open since the late $80$'s; \emph{the Manickham-Mikl\'os-Singhi (MMS) conjecture}.  
The MMS conjecture asserts that, when $n\ge 4k$, it holds $\Prb(X_P\ge 0)\ge \frac{k}{n}$. The bound is sharp, as can be seen by considering the population $P=(1,-\frac{1}{n-1},\ldots,-\frac{1}{n-1})$. We refer the reader to~\cite{AHS} or~\cite{Pokrovskiy} for the history of the MMS conjecture and for further details and references. 
In particular, the following result holds true, and appears to be the state of the art on the topic.  

\begin{theorem}[Pokrovskiy~\cite{Pokrovskiy}]\label{Pokrovskiy}
Let $P=(x_1,\ldots,x_n)$ be a population of size $n\ge 2$ such that $\sum_{i=1}^{n} x_i=0$. 
Let $X_P$ denote the sum of $k\in [n-1]$ elements that are sampled without replacement from $P$. 
Assume further that it holds $n\ge 10^{46}\cdot k$. Then
\[
\Prb(X_P\ge 0) \ge  \frac{k}{n} \, .
\] 
\end{theorem}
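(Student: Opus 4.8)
The plan is to recast the claim combinatorially and then split into two regimes according to whether the positive part of the population is dominated by a single element --- the feature of the extremal population $P=(1,-\tfrac1{n-1},\dots,-\tfrac1{n-1})$. First I would normalise: sorting, we may assume $x_1\ge\cdots\ge x_n$; the all-zero population is trivial, so $x_1>0>x_n$, and since the inequality is scale invariant we may rescale so that the positive elements of $P$ sum to $1$ and the negative ones sum to $-1$. Writing $N$ for the number of $F\in\binom{[n]}{k}$ with $\sum_{i\in F}x_i\ge 0$, the goal becomes $N\ge\binom{n-1}{k-1}$, i.e.\ $\Prb(X_P\ge 0)\ge k/n$. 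Fix a small absolute constant $\varepsilon>0$ and treat separately the case $x_1\le\varepsilon$ (the positive mass is ``spread'') and the case $x_1>\varepsilon$ (one ``dominant'' element).

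\smallskip\noindent\textbf{Spread case ($x_1\le\varepsilon$).}
Then the positive mass sits on at least $1/\varepsilon$ elements, and I expect $\Prb(X_P\ge 0)$ to exceed an absolute constant, far above $k/n$. From $\Exp\big[X_P\mathbf{1}_{\{X_P>0\}}\big]=\tfrac12\Exp|X_P|$ together with Cauchy--Schwarz one obtains $\Prb(X_P\ge 0)\ge(\Exp|X_P|)^2/\big(4\,\mathrm{Var}(X_P)\big)$, so it suffices to show that, in the absence of a dominant element, $\mathrm{Var}(X_P)$ is within an absolute factor of $(\Exp|X_P|)^2$ --- a quantitative ``no outlier'' statement that should follow from a Berry--Esseen-type bound for the sum in sampling without replacement, or from a direct fourth-moment computation. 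The delicate sub-case is a bounded number $\ell\ge 2$ of positive elements, each at most $\varepsilon$ but together carrying a constant share of the mass: a random $k$-set meets the block they form with probability roughly $\ell k/n>k/n$, and, conditioned on meeting it, the sampled sum is non-negative unless the other $k-1$ sampled elements are unusually negative --- an event controlled by Theorem~\ref{thm_Hoeffding} applied to the residual population.

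\smallskip\noindent\textbf{Dominant case ($x_1>\varepsilon$).}
Here I would count the $k$-sets $F$ containing the element of value $x_1$: such an $F$ has $\sum_{i\in F}x_i=x_1-|S_F|$, where $S_F\le 0$ is the sum of the other $k-1$ elements, so $F$ is good as soon as $|S_F|\le x_1$. Split the negative elements into ``large'' ones (magnitude exceeding $\varepsilon/k$, hence at most $k/\varepsilon$ of them by the normalisation) and ``small'' ones; since $k-1$ small negatives contribute less than $(k-1)\varepsilon/k<\varepsilon<x_1$ in absolute value, such an $F$ is good unless the large negatives it catches have total magnitude more than about $x_1$, and by the hypergeometric tail bound (Theorem~\ref{thm_Hoeffding}) applied to the list of large-negative magnitudes only an $o(1)$-fraction of the $k$-sets through $x_1$ fail this test once $n\ge Ck$. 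If even then the count falls short of $\binom{n-1}{k-1}$ --- which can happen only when a few negative elements are themselves of constant size, i.e.\ when $P$ is close to the extremal population --- I would recover the deficit by a separate, more careful count of good $k$-sets built from the positive elements and the small negatives. Thus the dominant case amounts to a stability statement: a population with one sufficiently large positive element has $\Prb(X_P\ge 0)\ge k/n$, with equality essentially only for the extremal population.

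\smallskip\noindent\textbf{Main obstacle.}
The hard part is the calibration. Because the extremal population lies exactly on the inequality, the thresholds ($\varepsilon$, the large/small cutoff for the negatives), the Hoeffding error terms, and the interface between the two cases must be tuned consistently against one another, and every such step costs a constant factor; keeping the product of all these losses bounded while demanding only $n\ge Ck$ rather than $n\ge Ck^2$ is what forces an astronomically large $C$ (Pokrovskiy's $10^{46}$), and lowering it to the conjectured $n\ge 4k$ is precisely the still-open MMS conjecture. I expect the heaviest technical load to be the book-keeping in the dominant case for populations that are near-extremal yet perturbed in several directions at once: a slightly-less-than-full leading element, several not-quite-negligible further positive elements, and negative elements of widely varying magnitudes.
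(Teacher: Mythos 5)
This theorem is not proved in the paper at all: it is quoted verbatim from Pokrovskiy~\cite{Pokrovskiy} as background for the Manickam--Mikl\'os--Singhi conjecture, so there is no internal proof to compare your attempt against. Judged on its own, your submission is a strategy outline rather than a proof: every quantitative step (the choice of $\varepsilon$, the ``no outlier'' variance comparison, the stability statement in the dominant case, the recovery of the deficit for near-extremal populations, and the derivation of any explicit constant, let alone $10^{46}$) is asserted or deferred rather than carried out. Since the extremal population attains equality, none of these losses can be absorbed into ``a sufficiently small constant''; the entire difficulty of Pokrovskiy's argument lives in exactly the steps you have left blank.

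There is also a concrete error in the spread case. Your dichotomy is on $x_1\le\varepsilon$ alone, but the second-moment bound $\Prb(X_P\ge 0)\ge(\Exp|X_P|)^2/\bigl(4\,\mathrm{Var}(X_P)\bigr)$ does not give an absolute constant there: take the population with one element equal to $-1$, $1/\varepsilon$ elements equal to $\varepsilon$, and the rest zero. Then $x_1=\varepsilon$, yet $\mathrm{Var}(X_P)=\frac{k(n-k)}{n(n-1)}\sum_i x_i^2\approx k/n$ while $\Exp|X_P|=2\,\Exp(X_P^-)\le 2k/n$, so the bound yields at most $k/n$ (and after the constant lost in Cauchy--Schwarz, strictly less). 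So $\mathrm{Var}(X_P)$ is \emph{not} within an absolute factor of $(\Exp|X_P|)^2$ in the absence of a dominant positive element; the dichotomy would have to be on $\max_i|x_i|$, and the concentrated-negative case then needs its own argument. Separately, in the dominant case note that the $k$-sets containing $x_1$ number exactly $\binom{n-1}{k-1}=\frac{k}{n}\binom{n}{k}$, so showing that all but an $o(1)$-fraction of them are good is not enough to reach $\Prb(X_P\ge 0)\ge k/n$; you must either show essentially all of them are good or quantify precisely how the deficit is recovered from sets avoiding $x_1$, which is the heart of the matter and is missing.
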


In this article, we  deduce bounds on the tail probability $\Prb(X_P\geq t)$, for $t\geq 0$. 
When $t=0$, we obtain both lower and upper bounds on  $\Prb(X_P\ge 0)$ that are valid for all values of $n\ge 2$ and $k\in [n-1]$. 
When $t>0$, we obtain an upper bound on $\Prb(X_P\geq t)$ which requires  information on the absolute deviation of the population. 
Our bounds regarding the first case read as follows.

\begin{theorem}\label{main_thm}
Let $P=(x_1,\ldots,x_n)$ be a population of size $n\ge 2$ such that $\sum_{i=1}^{n} x_i=0$ and $\sum_{i=1}^{n}|x_i|>0$. 
Let $X_P$ denote the sum of $k\in [n-1]$ elements that are sampled  without replacement from $P$. Then
\[
\Prb(X_P> 0) \,\ge\,\frac{e^{-1/3}}{8\sqrt{2\pi}} \cdot \frac{k}{n}\cdot \sqrt{\frac{n-k}{n\cdot k}}  \, .
\]
\end{theorem}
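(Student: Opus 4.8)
The plan is to combine a Paley--Zygmund/Markov-type lower bound on $\Prb(X_P>0)$ with a majorization (Schur convexity) argument that reduces the problem to the mean absolute deviation of a hypergeometric variable.

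\smallskip

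\emph{Step 1: passing to $\Exp|X_P|$.} Since $\Exp(X_P)=0$ we have $\Exp[(X_P)_+]=\tfrac12\Exp|X_P|$, and $\tfrac12\Exp|X_P|=\Exp[X_P\,\mathbf 1_{\{X_P>0\}}]\le(\sup X_P)\cdot\Prb(X_P>0)$. The supremum of $X_P$ is the sum of the positive coordinates of $P$, hence $\sup X_P\le\tfrac12\sum_i|x_i|$, which gives the clean bound $\Prb(X_P>0)\ge \Exp|X_P|\big/\sum_i|x_i|$. (One can alternatively use Cauchy--Schwarz to get $\Prb(X_P>0)\ge(\Exp|X_P|)^2/(4\Exp X_P^2)$, which is sometimes sharper; and $\Exp|X_P|\ge(\Exp X_P^2)^{3/2}(\Exp X_P^4)^{-1/2}$ if one wants to route through moments, using $\Exp X_P^2=\tfrac{k(n-k)}{n(n-1)}\sum_i x_i^2$ and the analogous fourth-moment identity.) So it suffices to bound $\Exp|X_P|/\sum_i|x_i|$ from below by $\tfrac{e^{-1/3}}{8\sqrt{2\pi}}\cdot\tfrac kn\sqrt{\tfrac{n-k}{nk}}$.

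\smallskip

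\emph{Step 2: majorization.} The map $\mathbf x\mapsto\Exp|X_P|$ is symmetric and convex, hence Schur convex, and $\mathbf x\mapsto\sum_i|x_i|$ is as well. I would use this to argue that the ratio $\Exp|X_P|/\sum_i|x_i|$ is minimized over zero-sum populations at a distinguished family: either the two-valued populations $P^{(i)}=\big(\underbrace{1,\dots,1}_{i},\underbrace{-\tfrac{i}{n-i},\dots,-\tfrac{i}{n-i}}_{n-i}\big)$ or the ``balanced'' populations $\big(\underbrace{1,\dots,1}_{j},\underbrace{-1,\dots,-1}_{j},0,\dots,0\big)$. For such $P$ one has $X_P=\tfrac{n}{n-i}\bigl(H_i-\Exp H_i\bigr)$ with $H_i\sim\text{Hyp}(n,i,k)$, so $\Exp|X_P|/\sum_i|x_i|$ becomes (a scalar multiple of) the mean absolute deviation $\Exp|H_i-\Exp H_i|$ of a hypergeometric random variable, and $\Prb(X_P>0)=\Prb(H_i>\Exp H_i)$.

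\smallskip

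\emph{Step 3: the hypergeometric estimate and assembling.} I would then prove a non-asymptotic lower bound on $\Exp|H_i-\Exp H_i|$ (equivalently on $\Prb(H_i>\Exp H_i)$) by applying Stirling's formula to the ratios of binomial coefficients appearing in $\Prb(H_i=m)=\binom{i}{m}\binom{n-i}{k-m}/\binom nk$; this is where the constants $\sqrt{2\pi}$ and $e^{-1/3}$ originate. Minimizing the resulting expression over $i$ (the extreme and the central values $i\approx 1$ and $i\approx n/2$ being the competing worst cases) and feeding it back through Step~1, together with the crude simplification $k(n-k)\ge n-1$, should produce exactly the stated bound.

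\smallskip

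\emph{Main obstacle.} The hard point is Step~2. Schur convexity of $\Exp|X_P|$ only gives \emph{upper} bounds for $\Exp|X_P|$ under majorization, so to get a \emph{lower} bound on $\Exp|X_P|/\sum_i|x_i|$ one must show that the extremal one-parameter family actually lies below a general $\mathbf x$ in the majorization order — which is not true pointwise for a single vector, so the reduction has to be organised (through a chain of partial-sum comparisons, or by first collapsing $P$ to at most two distinct nonzero values while controlling the ratio) rather than invoked in one line. The second difficulty is quantitative: the hypergeometric mean-absolute-deviation estimate must be strong enough to absorb the loss $\sup X_P\le\tfrac12\sum_i|x_i|$ from Step~1 while still leaving the constant $\tfrac{e^{-1/3}}{8\sqrt{2\pi}}$, so the Stirling bookkeeping has to be done with some care. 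I expect these two issues together, rather than any isolated inequality, to be the crux.
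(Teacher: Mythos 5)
Your plan is essentially the paper's proof: the paper bounds $\Prb(X_P>0)\ge\Exp|X_P|/(2\alpha)$ with $2\alpha=\sum_i|x_i|$, uses Schur convexity of $P\mapsto\Exp|X_P|$ to reduce to the two-valued populations $P_{i,\alpha}$ (your $P^{(i)}$), and then lower-bounds the hypergeometric mean absolute deviation via Ramasubban's identity $\Exp|H_i-ik/n|=\tfrac{2m}{n}(n-i-k+m)\Prb(H_i=m)$ together with Robbins' form of Stirling, which is exactly where the constant $e^{-1/3}/\sqrt{2\pi}$ originates. The ``main obstacle'' you flag in Step~2 is resolved precisely as you suggest, by a chain of partial-sum comparisons: taking $i$ to be the number of nonnegative coordinates of $P$, one verifies directly that $P_{i,\alpha}\prec P$ (the paper's Lemma~\ref{minimal}), so only the two-valued family is needed --- your alternative ``balanced'' $\pm1$ candidates are not minimal in the majorization order and play no role.
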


We obtain Theorem~\ref{main_thm} using ideas from the theory of Schur-convex functions. 
Clearly, the bound provided by Theorem~\ref{main_thm} also applies to the tail probability $\Prb(X_P\ge 0)$. 
Furthermore, due to symmetry, we deduce the following upper bound on the aforementioned tail.

\begin{corollary}\label{main_thm4}
With the same hypotheses as in Theorem~\ref{main_thm}, it holds  
\[
\Prb(X_P\ge 0) \,\le\, 1 -\frac{e^{-1/3}}{8\sqrt{2\pi}} \cdot \frac{k}{n}\cdot \sqrt{\frac{n-k}{n\cdot k}} \, .  
\]
\end{corollary}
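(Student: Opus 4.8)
The plan is to derive Corollary~\ref{main_thm4} from Theorem~\ref{main_thm} by applying the latter to the \emph{negated} population. Concretely, set $-P = (-x_1,\ldots,-x_n)$. First I would check that $-P$ satisfies the hypotheses of Theorem~\ref{main_thm}: indeed $\sum_{i=1}^{n}(-x_i) = -\sum_{i=1}^{n}x_i = 0$ and $\sum_{i=1}^{n}|-x_i| = \sum_{i=1}^{n}|x_i| > 0$, so $-P$ is again a population of size $n\ge 2$ summing to zero with strictly positive absolute-value sum.

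Next I would record the elementary distributional identity $X_{-P} \sim -X_P$. This holds because both quantities are built from the \emph{same} uniformly random index set $\mathbb{I}\in\binom{[n]}{k}$: writing $X_{-P} = \sum_{i\in\mathbb{I}}(-x_i) = -\sum_{i\in\mathbb{I}}x_i = -X_P$, the two random variables coincide pointwise (so in particular are equal in distribution). Consequently, for every real $t$ we have $\Prb(X_{-P} > t) = \Prb(-X_P > t) = \Prb(X_P < -t)$.

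Finally I would combine these observations. Applying Theorem~\ref{main_thm} to the population $-P$ gives
\[
\Prb(X_{-P} > 0) \,\ge\, \frac{e^{-1/3}}{8\sqrt{2\pi}}\cdot\frac{k}{n}\cdot\sqrt{\frac{n-k}{n\cdot k}}\,,
\]
and by the identity above the left-hand side equals $\Prb(X_P < 0)$. Taking complements, $\Prb(X_P \ge 0) = 1 - \Prb(X_P < 0) \le 1 - \frac{e^{-1/3}}{8\sqrt{2\pi}}\cdot\frac{k}{n}\cdot\sqrt{\frac{n-k}{n\cdot k}}$, which is the desired bound.

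There is no real obstacle here: the argument is a one-line symmetry reduction, and the only points worth stating carefully are (i) that the negated population still meets the hypotheses of Theorem~\ref{main_thm}, and (ii) that $X_{-P}$ and $-X_P$ are genuinely the same random variable under the shared uniform choice of $\mathbb{I}$, so that the strict inequality $X_{-P} > 0$ transfers exactly to $X_P < 0$ without any boundary issues.
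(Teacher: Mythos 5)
Your proposal is correct and follows exactly the paper's own argument: apply Theorem~\ref{main_thm} to the negated population $-P$, use the pointwise identity $X_{-P}=-X_P$ to rewrite $\Prb(X_{-P}>0)$ as $\Prb(X_P<0)$, and take complements. The extra care you take in verifying the hypotheses for $-P$ is a nice touch but does not change the route.
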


As already mentioned, our bounds rely on ideas from the theory of majorization. 
In particular, a crucial idea in the proofs, which is an idea that renders the problem amenable to basic results from Schur-convexity, is to consider the problem under the additional assumption that the  population $P=(x_1,\ldots,x_n)$  satisfies $\sum_{i=1}^n |x_i|=2\alpha$, for some $\alpha >0$, and  deduce a bound for such a population.  It turns out that, for the case $t=0$, the resulting lower and upper bounds are \emph{independent} of the parameter $\alpha$. 
The same idea applies to the  case $t>0$, but the corresponding bounds do depend on $\alpha$. In particular, we obtain the following upper bound for this case. 

\begin{theorem}\label{main_thm2}
Let $P=(x_1,\ldots,x_n)$ be a population of size $n\ge 2$ such that $\sum_{i=1}^{n} x_i=0$ and $\sum_{i=1}^{n} |x_i|=2\alpha$, for some $\alpha>0$.  
Let $X_P$ denote the sum of $k\in [n-1]$ elements that are sampled  without replacement from $P$, and let $t\in (0,\alpha)$ be fixed. Then   
\[
\Prb(X_P\ge t)\, \le\, 1 - \min\left\{1\, ,\, \frac{t}{\alpha -t}\right\}\cdot\left(1 - \frac{2k(n-k)}{n(n-1)}\right) \, .
\]
\end{theorem}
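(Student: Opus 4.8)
The plan is to bound $\Prb(X_P\ge t)$ by the minimum of two elementary estimates and then reduce the claimed inequality to a routine algebraic check.

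\emph{A reverse Markov estimate.} Since $\sum_i x_i=0$ and $\sum_i|x_i|=2\alpha$, the positive $x_i$ sum to $\alpha$, so $X_P=\sum_{i\in\mathbb I}x_i\ge -\alpha$ almost surely. Applying Markov's inequality to the nonnegative variable $\alpha+X_P$, which has expectation $\alpha$, yields $\Prb(X_P\ge t)\le \frac{\alpha}{\alpha+t}$ for every $t>0$.

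\emph{A bound on the mean absolute deviation via majorization.} This is where the hypothesis $\sum_i|x_i|=2\alpha$ enters crucially. Write
\[
\Exp|X_P|=\frac{1}{\binom{n}{k}}\sum_{S\in\binom{[n]}{k}}\Big|\,\sum_{i\in S}x_i\,\Big|.
\]
As a function of $(x_1,\dots,x_n)$ this is a finite sum of absolute values of linear forms, hence convex, and it is clearly invariant under permutations of the coordinates; a symmetric convex function on $\mathbb{R}^n$ is Schur convex. Next, every zero-sum vector with $\sum_i|x_i|=2\alpha$ satisfies $(x_1,\dots,x_n)\prec(\alpha,-\alpha,0,\dots,0)$, because the sum of the $\ell$ largest coordinates never exceeds the sum $\alpha$ of all the positive coordinates, while the total sums agree. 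Schur convexity then gives $\Exp|X_P|\le \Exp|X_Q|$ with $Q=(\alpha,-\alpha,0,\dots,0)$, and a one-line computation shows $\Exp|X_Q|=2\alpha\cdot\Prb(\text{exactly one of the two nonzero positions is sampled})=\frac{2k(n-k)}{n(n-1)}\,\alpha$. Since $\Exp X_P=0$, the positive part $(X_P)_+=\max\{X_P,0\}$ obeys $\Exp(X_P)_+=\tfrac12\Exp|X_P|\le\frac{k(n-k)}{n(n-1)}\alpha$, and because $(X_P)_+\ge t$ on the event $\{X_P\ge t\}$ we get $\Prb(X_P\ge t)\le \frac{k(n-k)}{n(n-1)}\cdot\frac{\alpha}{t}=\frac{v\alpha}{2t}$, where $v=\frac{2k(n-k)}{n(n-1)}$.

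\emph{Putting it together.} It remains to verify that $\min\big\{\frac{\alpha}{\alpha+t},\,\frac{v\alpha}{2t}\big\}\le 1-\min\{1,\frac{t}{\alpha-t}\}(1-v)$ for $t\in(0,\alpha)$. If $t\ge\alpha/2$ the right-hand side equals $v$ and the second estimate already gives $\frac{v\alpha}{2t}\le v$. If $t<\alpha/2$ the right-hand side is $\frac{\alpha-2t+vt}{\alpha-t}$, and a short manipulation (clearing denominators and dividing by $\alpha-2t>0$) shows that the second estimate is at most it exactly when $v(\alpha+t)\le 2t$, whereas the first estimate is at most it exactly when $v(\alpha+t)\ge 2t$; so one of the two always does the job.

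The only nonroutine ingredient is the majorization bound $\Exp|X_P|\le v\alpha$; once one recognizes that $\Exp|X_P|$ is symmetric and convex, hence Schur convex, and that $(\alpha,-\alpha,0,\dots,0)$ majorizes every admissible population, the rest is bookkeeping. I expect the identification of $(\alpha,-\alpha,0,\dots,0)$ as the majorization-maximal element of the feasible set (and the verification that this is the right extremal configuration) to be the main conceptual point, with the final algebraic comparison being the only other place that requires a little care.
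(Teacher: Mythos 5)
Your proof is correct, but it takes a genuinely different route from the paper's. The paper symmetrizes (passing to $-P$), minorizes the indicator $\mathbf{1}_{\{Y>-t/\alpha\}}$ by a single concave piecewise-linear test function $f$, invokes Karlin's majorization inequality (Lemma~\ref{cvx_lemma}) together with $P\prec P^*=(\alpha,0,\ldots,0,-\alpha)$ to pass to the extremal population, and then evaluates $\Exp(f(Y_{P^*}))$ exactly, which is the stated bound with no further work. You instead prove two separate Markov-type estimates --- $\Prb(X_P\ge t)\le\frac{\alpha}{\alpha+t}$ from boundedness below by $-\alpha$, and $\Prb(X_P\ge t)\le\frac{v\alpha}{2t}$ with $v=\frac{2k(n-k)}{n(n-1)}$ from the Schur-convexity of $P\mapsto\Exp|X_P|$ (the paper's Lemma~\ref{Schur1}, here used in the opposite direction, toward the majorization-\emph{maximal} element of $B_\alpha^{(n)}$ rather than the minimal ones) --- and then check algebraically that their minimum never exceeds the claimed bound. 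I verified your case analysis: for $t\ge\alpha/2$ the second estimate gives $\frac{v\alpha}{2t}\le v$, and for $t<\alpha/2$ the second estimate suffices exactly when $v(\alpha+t)\le 2t$ while the first suffices exactly when $v(\alpha+t)\ge 2t$, so the two cases are complementary. Your approach is more elementary (it avoids Karlin's lemma for general convex functions of $X_P$, needing only Markov's inequality plus the Schur-convexity of the mean absolute deviation) and makes transparent that the theorem is the minimum of two simple moment bounds; the paper's single test-function argument is shorter at the assembly stage, avoids the final case analysis, and its template reuses directly for the lower bound in Appendix~\ref{appendix1}. One small slip: the intermediate expression $\Exp|X_Q|=2\alpha\cdot\Prb(\text{exactly one of the two nonzero positions is sampled})$ has a spurious factor of $2$ (that probability is already $\frac{2k(n-k)}{n(n-1)}$, and $|X_Q|=\alpha$ on that event), but the value $\Exp|X_Q|=v\alpha$ you carry forward is correct.
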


Observe that Hoeffding's bound requires knowledge on the minimum and the maximum value of the population while our bound requires knowledge on the absolute deviation of the population; hence it may be seen as being complementary to Theorem~\ref{thm_Hoeffding}. 

Finally, let us remark that our approach also yields a lower bound on $\Prb(X_P\ge t)$, for $t>0$, which is applicable to moderate values of $t > 0$. Due to its limited range of applicability, we defer the statement, and the proof, of this lower bound to  Appendix~\ref{appendix1}.

\subsection{Organization}
The remaining part of our article is organized as follows. 
In Section~\ref{sec:1}, we prove Theorem~\ref{main_thm}. 
The proof is based on ideas from the theory of majorization, combined with lower estimates on the mean absolute deviation of a hypergeometric random variable. 
The proof of Theorem~\ref{main_thm2} is obtained in a similar manner, and is deferred to Section~\ref{sec:2}. Finally, in Section~\ref{sec:3}, we  present pictorial comparisons between the bound provided by Theorem~\ref{main_thm2} and certain existing refinements of Theorem~\ref{thm_Hoeffding}. 
\section{Proof of Theorem~\ref{main_thm}}
\label{sec:1}

This section is devoted to the proof of Theorem~\ref{main_thm}. 
Let us begin with some observations. 
Given a real-valued random variable $X$ such that $\Exp(X)=0$, let 
\[
X^+ = \max\{0\, ,\, X\} \quad \text{ and } \quad X^- = \max\{0\, ,\, -X\} 
\]
and observe that $X = X^+ - X^-$ and that $|X|=X^+ + X^-$\, ; hence it holds 

\begin{equation}\label{eqx1}
\Exp(X^+) = \Exp(X^-) \quad \text{ and } \quad \Exp(|X|) = 2\cdot \Exp(X^+) \, .
\end{equation}
The proof of Theorem~\ref{main_thm} is based upon the following observation. 

\begin{lemma}[Folklore]
\label{folklore}
Let $X$ be a real-valued random variable  such that $\Exp(X)=0$. Then 
\[
\Prb(X> 0) = \frac{\Exp(|X|)}{2\cdot \Exp(X\, |\, X> 0)} \, .
\]
\end{lemma}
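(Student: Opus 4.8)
The plan is to reduce the identity to a one-line manipulation of the positive part $X^+ = \max\{0,X\}$, using only~\eqref{eqx1} together with the definition of conditional expectation.

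First I would observe that, by the definition of conditional expectation, $\Exp(X^+) = \Exp\bigl(X\cdot \mathbf{1}_{\{X>0\}}\bigr) = \Exp(X\mid X>0)\cdot\Prb(X>0)$, which is legitimate provided $\Prb(X>0)>0$. This proviso is not a real restriction: if $\Prb(X>0)=0$ then $X\le 0$ almost surely, and, combined with $\Exp(X)=0$, this forces $X=0$ almost surely — a degenerate situation that is excluded in the application of the lemma, where $X=X_P$ and the hypothesis $\sum_{i=1}^n|x_i|>0$ guarantees $\Prb(X_P>0)>0$.

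Next I would invoke~\eqref{eqx1}, namely $\Exp(|X|)=2\,\Exp(X^+)$, which follows at once from $|X| = X^+ + X^-$ together with $\Exp(X^+)=\Exp(X^-)$, the latter being a consequence of $0=\Exp(X)=\Exp(X^+)-\Exp(X^-)$. Substituting the previous display into this relation gives
\[
\Exp(|X|) = 2\,\Exp(X\mid X>0)\cdot\Prb(X>0),
\]
and since $\Exp(X\mid X>0)>0$ — on the positive-probability event $\{X>0\}$ the variable $X$ is strictly positive — dividing both sides by $2\,\Exp(X\mid X>0)$ yields the asserted formula.

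There is no genuine obstacle in this argument; the only point that calls for a modicum of care is ensuring that the conditional expectation and the ensuing division are well defined, and this is precisely why one tacitly assumes non-degeneracy, i.e. $\Exp(|X|)>0$ (equivalently $\Prb(X>0)>0$), as is automatic in every use of the lemma in this paper.
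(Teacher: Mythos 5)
Your proof is correct and follows essentially the same route as the paper: express $\Exp(X^+)$ as $\Exp(X\mid X>0)\cdot\Prb(X>0)$ and combine with the identity $\Exp(|X|)=2\,\Exp(X^+)$ from~\eqref{eqx1}. The only difference is that you explicitly address the degenerate case $\Prb(X>0)=0$, which the paper leaves implicit; this is a welcome but minor refinement, not a different argument.
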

\begin{proof}
Observe that it holds   
\[
\Exp(X^+) = \Exp(\max\{0\, ,\, X\}) = \Exp(X\, |\, X\ge 0)\cdot \Prb(X\ge 0) 
\]
and   
\[
\Exp(X|X\ge 0)\cdot \Prb(X\ge 0) = \Exp(X|X> 0)\cdot \Prb(X> 0) \, .
\]
The desired result follows from~\eqref{eqx1}.  
\end{proof}

Now let $P=(x_1,\ldots,x_n)$ be a population of size $n\ge 2$ which satisfies $\sum_{i=1}^{n}x_i =0$, and let $k\in [n-1]$ be fixed.   
Choose $\mathbb{I}\in\binom{[n]}{k}$ uniformly at random and let $X_P = \sum_{i\in\mathbb{I}}x_i$ be the sum of the elements in the sample. Note that $\Exp(X_P)=0$.
We aim to apply Lemma~\ref{folklore} to $X_P$. 
We do so by  imposing ``additional structure" on the population. More concretely, 
we view a population $P=(x_1,\ldots,x_n)$ as a vector in $\mathbb{R}^n$. 
Given $\alpha>0$, define the set 
\begin{equation}\label{B_a_set}
B_{\alpha}^{(n)} = \left\{P=(x_1,\ldots,x_n)\in\mathbb{R}^n \, : \, \sum_{i=1}^n x_i = 0 \, \,\, \& \,\, \, \sum_{i=1}^n |x_i| = 2\alpha \right\} \, .
\end{equation}
We will obtain a lower bound on $\Prb(X_P> 0)$, for $P\in B_{\alpha}^{(n)}$, which will turn out to be independent of $\alpha$. 
In fact, we will show that the bound provided by Theorem~\ref{main_thm} holds true for all $\alpha>0$ and all $P\in B_{\alpha}^{(n)}$. 
Towards this end, observe that the assumption  $\sum_{i=1}^{n}|x_i|>0$ implies that there exists  $\alpha>0$ such that $P\in B_{\alpha}^{(n)}$; hence it holds $X_P\le \alpha$ and therefore   we have 
\begin{equation}\label{eqx2}
\Exp(X_P \, |\, X_P > 0) \le \alpha \, ,\, \text{ for } \, P\in B_{\alpha}^{(n)} \, .
\end{equation}
\noindent
Since $\Exp(X_P)=0$, it follows from~\eqref{eqx2} and Lemma~\ref{folklore}  that 

\begin{equation}\label{eqx3}
\Prb(X_P > 0) = \frac{\Exp(|X_P|)}{2\cdot \Exp(X_P\, |\, X_P> 0)} \ge \frac{\Exp(|X_P|)}{2\cdot \alpha} \, ,\, \text{ for } \, P\in B_{\alpha}^{(n)} \, .
\end{equation}
\noindent
Hence it is enough to estimate  $\Exp(|X_P|)$, for $P\in B_{\alpha}^{(n)}$, from below. 
This will be accomplished via Schur convexity. 
Note that 
\[
\Exp(|X_P|) = \frac{1}{\binom{n}{k}} \sum_{F\in \binom{[n]}{k}} |\sum_{i\in F} x_i| \, .
\]
The next result guarantees the Schur-convexity $\Exp(|X_P|)$.

\begin{lemma}
\label{Schur1}
Let $f:\mathbb{R}^n\to \mathbb{R}$ be the function defined by 
\[
f(x_1,\ldots,x_n) = \frac{1}{\binom{n}{k}} \sum_{F\in \binom{[n]}{k}} |\sum_{i\in F} x_i| \, .
\]
Then $f$ is Schur-convex on $\mathbb{R}^n$. 
\end{lemma}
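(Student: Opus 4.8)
The plan is to use the standard characterization of Schur-convexity for functions that are symmetric and, say, $C^1$ (or even just: symmetric, with the right behavior under "Robin Hood" transfers). Recall the Schur–Ostrowski criterion: a symmetric function $f$ is Schur-convex on $\mathbb{R}^n$ if and only if for every pair of indices $i\neq j$ one has
\[
(x_i - x_j)\left(\frac{\partial f}{\partial x_i} - \frac{\partial f}{\partial x_j}\right) \ge 0 \, .
\]
Since $f$ here is an average of absolute values of linear forms, it is convex (a sup/average of affine-then-absolute-value functions is convex) and plainly symmetric, so it is continuous and convex, hence differentiable almost everywhere; one can either verify the criterion at points of differentiability and invoke a density/continuity argument, or, cleaner, argue directly via transfers. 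I expect the transfer argument to be the least painful route and the one I would write up.

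First I would reduce to a single transfer step: it suffices to show that if $\mathbf{y}$ is obtained from $\mathbf{x}$ by a transfer that moves the two coordinates closer together — concretely, for some pair of indices with $x_i > x_j$ and some $0 < \varepsilon \le x_i - x_j$, replace $x_i$ by $x_i-\varepsilon$ and $x_j$ by $x_j+\varepsilon$, leaving all other coordinates fixed — then $f(\mathbf{y}) \le f(\mathbf{x})$. This is enough because $\mathbf{x}\prec\mathbf{y}$ implies $\mathbf{x}$ can be obtained from $\mathbf{y}$ by a finite sequence of such transfers (the Hardy–Littlewood–Pólya / Muirhead result), and $f$ is symmetric so the labels don't matter.

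Second, for the single transfer I would fix the pair $\{i,j\}$ (by symmetry, say $\{1,2\}$) and split the family $\binom{[n]}{k}$ into four classes according to whether a set $F$ contains $1$, contains $2$, both, or neither. Sets containing both or neither have $\sum_{\ell\in F} x_\ell$ unchanged by the transfer, so they contribute equally to $f(\mathbf{x})$ and $f(\mathbf{y})$. The sets containing exactly one of $\{1,2\}$ pair up naturally: to each $F \ni 1,\ F\not\ni 2$ associate $F' = (F\setminus\{1\})\cup\{2\}$. Writing $s = \sum_{\ell\in F\setminus\{1\}} x_\ell$ (the common "core" sum), the contribution of the pair $\{F,F'\}$ to $\binom{n}{k}f$ is $|s + x_1| + |s + x_2|$ before the transfer and $|s + x_1 - \varepsilon| + |s + x_2 + \varepsilon|$ after it. Since $x_1 - \varepsilon \ge x_2$ and $x_1 \ge x_2 + \varepsilon$, the interval $[x_2+\varepsilon,\ x_1-\varepsilon]$ (shift by $s$) is contained in $[x_2, x_1]$ (shift by $s$), so by convexity of $t\mapsto|t|$ — or by the elementary fact that $|a'|+|b'|\le|a|+|b|$ whenever $a'+b'=a+b$ and $[\min\{a',b'\},\max\{a',b'\}]\subseteq[\min\{a,b\},\max\{a,b\}]$ — we get $|s+x_1-\varepsilon|+|s+x_2+\varepsilon| \le |s+x_1|+|s+x_2|$. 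Summing this inequality over all such pairs and adding the equal contributions from the other two classes yields $f(\mathbf{y}) \le f(\mathbf{x})$, completing the step.

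The main obstacle, such as it is, is purely bookkeeping: making sure the pairing $F\leftrightarrow F'$ is a genuine bijection between $\{F : 1\in F,\ 2\notin F\}$ and $\{F : 2\in F,\ 1\notin F\}$ (it is, being an involution-like swap), and that the "core sum" $s$ is literally the same for $F$ and $F'$ (it is, since $F\setminus\{1\} = F'\setminus\{2\}$). The one genuine inequality needed is the elementary one-line lemma about $|a|+|b|$ under a contraction of the pair, which follows immediately from convexity of $|\cdot|$; I would state and prove it as a sub-claim. No estimates, no calculus, no differentiability subtleties if one goes the transfer route.
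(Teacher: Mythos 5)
Your proof is correct, but it takes a genuinely different route from the paper's. The paper's argument is two lines: each summand $\chi_F(\mathbf{x})=|\sum_{i\in F}x_i|$ is the absolute value of a linear form, hence convex, so $f$ is convex; $f$ is obviously symmetric; and a convex symmetric function is Schur-convex by a standard result (cited from Marshall--Olkin--Arnold, p.~97). You instead verify Schur-convexity directly: reduce to a single Robin Hood transfer via the Hardy--Littlewood--P\'olya decomposition of majorization into T-transforms, split $\binom{[n]}{k}$ into the four classes according to membership of the two affected indices, pair $F\leftrightarrow (F\setminus\{1\})\cup\{2\}$, and apply the elementary contraction inequality $|a'|+|b'|\le|a|+|b|$ (which is itself just Schur-convexity of $(a,b)\mapsto|a|+|b|$ on $\mathbb{R}^2$, so your argument in effect reduces the general case to the two-dimensional one). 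All the steps check out: the pairing is a bijection, the core sum $s$ is common to $F$ and $F'$, and with $0<\varepsilon\le x_1-x_2$ both new values lie in $[x_2,x_1]$, so the contraction lemma applies regardless of which of $x_1-\varepsilon$, $x_2+\varepsilon$ is larger. You were also right to abandon the Schur--Ostrowski derivative criterion, since $f$ is not everywhere differentiable. What each approach buys: the paper's proof is shorter and leans on a classical black box; yours is self-contained at the level of the pair inequality but instead leans on the (equally classical, and not entirely trivial) HLP theorem that majorization is generated by finitely many transfers, and costs some bookkeeping. Either is acceptable; the paper's is the more economical write-up.
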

\begin{proof}
For every $F\in\binom{[n]}{k}$, let $\chi_F:\mathbb{R}^n\to\mathbb{R}$ denote the function defined via 
\[
\chi_F(x_1,\ldots,x_n) = |\sum_{i\in F} x_i|  
\]
and note that $\chi_F$ is convex. 
Hence $f$ is convex. Moreover, $f$ is clearly symmetric under permutations of the arguments. Since $f$ is convex and symmetric, it follows (see~\cite[p.~97]{Marshall_Olkin_Arnold}) that $f$ is Schur-convex, as desired. 
\end{proof}

It follows from Lemma~\ref{Schur1} that whenever $P\prec P^{\prime}$, then $\Exp(|X_P|) \le \Exp(|X_{P^{\prime}}|)$. 
Hence it is enough to find ``minimal" (with respect to majorization) populations. 
We focus on the set $B_{\alpha}^{(n)}$, defined in~\eqref{B_a_set}. 
Let $\alpha>0$ be fixed. For $i\in [n-1]$, let $P_{i,\alpha}\in\mathbb{R}^n$ denote the vector 
\begin{equation}\label{p_vector}
  P_{i,\alpha}=\left(\frac{\alpha}{i},\ldots,\frac{\alpha}{i}, -\frac{\alpha}{n-i}, \ldots,-\frac{\alpha}{n-i}\right) \, ,
\end{equation}
whose first $i$ coordinates are equal to $\frac{\alpha}{i}$ and the remaining $n-i$ coordinates are equal to $-\frac{\alpha}{n-i}$.
Observe that $P_{i,\alpha}\in B_{\alpha}^{(n)}$.

\begin{lemma}\label{minimal}
Fix $\alpha>0$. Then for every $P\in B_{\alpha}^{(n)}$ there exists 
$i\in [n-1]$ such that $P_{i,\alpha} \prec P$.  
\end{lemma}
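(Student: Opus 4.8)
The plan is to show that every $P \in B_{\alpha}^{(n)}$ is majorized by one of the ``two-block'' populations $P_{i,\alpha}$, where $i$ is chosen to be the number of strictly positive coordinates of $P$ (or something close to it). Concretely, given $P = (x_1,\ldots,x_n) \in B_{\alpha}^{(n)}$, sort its coordinates decreasingly as $x_{[1]} \ge \cdots \ge x_{[n]}$, and let $i \in \{1,\ldots,n-1\}$ be the number of indices $j$ with $x_{[j]} > 0$. Note that $i \ge 1$ (otherwise all coordinates are $\le 0$ and sum to $0$, forcing $\sum |x_j| = 0$, contradicting $\alpha > 0$) and $i \le n-1$ (similarly, not all coordinates can be positive while summing to zero). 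The positive coordinates sum to $\alpha$ and the non-positive ones sum to $-\alpha$.

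With this $i$ fixed, I need to verify the two majorization conditions for $P_{i,\alpha} \prec P$. The equality of total sums is immediate since both vectors lie in $B_{\alpha}^{(n)}$ (sum zero). For the partial-sum inequalities, I would split into two ranges. For $\ell \le i$: the top $\ell$ coordinates of $P$ are among the positive ones, and since each of the first $i$ coordinates of $P_{i,\alpha}$ equals $\frac{\alpha}{i} = \frac{1}{i}\sum_{j=1}^{i} x_{[j]}$, i.e. the average of the $i$ positive coordinates, a standard fact says that the largest $\ell$ of a list dominate $\ell$ copies of the full average: $\sum_{j=1}^{\ell} x_{[j]} \ge \frac{\ell}{i}\sum_{j=1}^{i}x_{[j]} = \ell \cdot \frac{\alpha}{i}$, which is exactly $\sum_{j=1}^{\ell}(P_{i,\alpha})_{[j]}$. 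For $i \le \ell \le n-1$: I use the complementary form of majorization, namely $\sum_{j=1}^{\ell} x_{[j]} = -\sum_{j=\ell+1}^{n} x_{[j]}$, and the tail sum $\sum_{j=\ell+1}^{n}x_{[j]}$ consists of the $n-\ell$ smallest (most negative) coordinates of $P$, all drawn from the $n-i$ non-positive ones whose average is $-\frac{\alpha}{n-i}$; the smallest $n-\ell$ of a list are dominated below by $(n-\ell)$ copies of the average, giving $\sum_{j=\ell+1}^{n} x_{[j]} \le -(n-\ell)\frac{\alpha}{n-i}$, hence $\sum_{j=1}^{\ell}x_{[j]} \ge (n-\ell)\frac{\alpha}{n-i}$; and the latter equals $\sum_{j=1}^{\ell}(P_{i,\alpha})_{[j]}$ since the sorted $P_{i,\alpha}$ has $i$ entries $\frac{\alpha}{i}$ followed by $n-i$ entries $-\frac{\alpha}{n-i}$, so its first $\ell$ entries sum to $\alpha - (\ell - i)\frac{\alpha}{n-i} = (n-\ell)\frac{\alpha}{n-i}$.

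The routine lemma underlying both ranges is: if $y_1 \ge \cdots \ge y_m$ are reals with average $\bar y$, then for any $1 \le r \le m$ one has $\sum_{j=1}^{r} y_j \ge r\bar y$ and $\sum_{j=m-r+1}^{m} y_j \le r\bar y$; this follows since the $r$ largest each exceed the average of the remaining block, or more simply from $\sum_{j=1}^{r} y_j \ge \frac{r}{m}\sum_{j=1}^{m} y_j$, which is the statement that partial averages of a decreasing sequence are non-increasing. I would either cite this from \cite{Marshall_Olkin_Arnold} or dispatch it in one line.

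I expect the main obstacle to be purely bookkeeping: making the case split at $\ell = i$ clean, correctly handling the sorted order of $P_{i,\alpha}$ (checking that $\frac{\alpha}{i} > 0 > -\frac{\alpha}{n-i}$ so the displayed order in \eqref{p_vector} is indeed the decreasing order), and being careful that the ``positive coordinates'' of $P$ really do sum to exactly $\alpha$ — which is where the hypothesis $\sum_i x_i = 0$ together with $\sum_i |x_i| = 2\alpha$ gets used, since $\sum_{x_j > 0} x_j = \frac{1}{2}(\sum_i |x_i| + \sum_i x_i) = \alpha$. There is no real analytic difficulty; the content is entirely the elementary majorization fact about partial sums versus averages, and the choice of $i$ as the positive-support size.
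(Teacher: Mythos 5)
Your proof is correct and follows essentially the same route as the paper: you pick the same extremal population $P_{i,\alpha}$ with $i$ the size of the positive support and verify the partial-sum inequalities for $\ell\le i$ and $\ell\ge i$ separately. The only difference is cosmetic — the paper establishes $\sum_{j=1}^{\ell}x_{[j]}\ge \ell\alpha/i$ by an iterated contradiction argument, whereas you invoke the standard fact that the top $r$ entries of a block dominate $r$ copies of the block average; both are the same elementary observation.
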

\begin{proof}
Let the coordinates of $P$ be written in decreasing order 
$x_1\ge x_2\ge\cdots\ge x_n$. 
Let $i$ be the largest integer such that $x_i\ge 0$. 
We claim that $x_1\ge \frac{\alpha}{i}$. Indeed, if this is not the case then $x_1<\frac{\alpha}{i}$ and thus 
$\sum_{j=1}^{i}x_j < \alpha$, which contradicts the fact that $\sum_{j=1}^{i}x_j=\alpha$. 
Similarly, it holds $x_1+x_2 \ge 2\frac{\alpha}{i}$. 
Indeed, if this is not the case then the fact that 
$x_1\ge \frac{\alpha}{i}$ implies that 
$x_2 <\frac{\alpha}{i}$; hence $x_j <\frac{\alpha}{i}$ for $j\ge 3$ and thus
$\sum_{j=1}^{i}x_i = (x_1+x_2) +  \sum_{j=3}^{i}x_j< \alpha$, a contradiction. 
Continuing in this manner we see that 
$\sum_{j=1}^{\ell}x_j\ge \ell\cdot \frac{\alpha}{i}$, for all $\ell\in\{1,\ldots,i\}$. 

We now claim that it holds $x_{i+1}\ge -\frac{\alpha}{n-i}$. 
Indeed, if this is not the case then $x_{i+1}< -\frac{\alpha}{n-i}$ and thus $\sum_{\ell=i+1}^n x_{\ell} < -\alpha$, a contradiction. Continuing in this manner we find that 
$\sum_{j=i+1}^{\ell}x_j\ge -\ell\cdot \frac{\alpha}{n-i}$, for all $\ell\in\{i+1,\ldots,n\}$. It follows that $P_{i,\alpha} \prec P$, as desired. 
\end{proof}

Now let $P\in B_{\alpha}^{(n)}$. Since, by Lemma~\ref{minimal}, it holds $P_{i,\alpha} \prec P$ for some $i\in [n-1]$,  it follows from Lemma~\ref{Schur1} that 
\begin{equation}\label{eqx4}
\Exp(|X_P|)\ge \Exp(|X_{P_{i,\alpha}}|) \, ,
\end{equation}
where $P_{i,\alpha}$ is the vector defined in~\eqref{p_vector},
and it is therefore enough to find a lower bound on $\Exp(|X_{P_{i,\alpha}}|)$. 
Towards this end, we first express $\Exp(|X_{P_{i,\alpha}}|)$ in terms of a hypergeometric random variable. 

Let $H_i\sim \text{Hyp}(n,i,k)$ be the number of positive elements in a sample without replacement of size $k$ from the population $P_{i,\alpha}$, and observe that 
\begin{equation}\label{x_ip}
X_{P_{i,\alpha}} = H_i\cdot\frac{\alpha}{i} - (k-H_i)\frac{\alpha}{n-i} = \frac{\alpha n}{i(n-i)} \left(H - \frac{ik}{n}\right) \, .
\end{equation}
Hence we have 
\begin{equation}\label{eqx5}
\Exp(|X_{P_{i,\alpha}}|) = \frac{\alpha n}{i(n-i)}\cdot \Exp\left(|H_i - \frac{ik}{n}|\right) \, .
\end{equation}
Summarizing the above, it follows from~\eqref{eqx3},~\eqref{eqx4} and~\eqref{eqx5} that for every $P\in B_{\alpha}^{(n)}$ it holds 
\[
\Prb(X_P> 0)\, \ge\, \frac{\Exp(|X_P|)}{2\alpha} \,\ge\, \frac{\frac{\alpha n}{i(n-i)}\cdot \Exp\left(|H_i - \frac{ik}{n}|\right)}{2\alpha} \,=\, \frac{n}{2i(n-i)}\cdot \Exp\left(|H_i - \frac{ik}{n}|\right) \, , 
\]
a bound which is \emph{independent} of $\alpha$. 
Therefore, since $\frac{k}{n}\cdot \sqrt{\frac{n-k}{n\cdot k}} = \sqrt{\frac{k}{n}}\cdot \frac{\sqrt{n-k}}{n}$,  the proof of Theorem~\ref{main_thm} will be complete  when the following result is proven.  

\begin{lemma}\label{mad_hyp} Let $H_i\sim\text{Hyp}(n,i,k)$, where $i\in [n-1]$ and $k\in [n-1]$, for some integer $n\ge 2$. Then it holds
\begin{equation}\label{eqx6}
\frac{n}{2i(n-i)}\cdot \Exp\left(|H_i - \frac{ik}{n}|\right) \,\ge\,\frac{e^{-1/3}}{8\sqrt{2\pi}} \cdot \sqrt{\frac{k}{n}}\cdot \frac{\sqrt{n-k}}{n} \, .
\end{equation}
\end{lemma}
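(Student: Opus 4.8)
The plan is to find an exact expression for the mean absolute deviation $\Exp\bigl(|H_i-\mu|\bigr)$, with $\mu=\tfrac{ik}{n}=\Exp(H_i)$, and then bound it from below. Set $r=\lfloor\mu\rfloor$ and let $H_i'\sim\text{Hyp}(n-1,i-1,k-1)$. First I would prove the identity
\[
\Exp\bigl(|H_i-\mu|\bigr)=2\,\Exp\bigl((H_i-\mu)^+\bigr)=\frac{2\mu\,(n-i-k+r+1)}{n}\cdot\Prb(H_i'=r),
\]
which is the hypergeometric analogue of de Moivre's classical mean-deviation formula for the binomial. This follows by combining the size-biasing identity $m\,\Prb(H_i=m)=\mu\,\Prb(H_i'=m-1)$ with the decomposition of a uniformly random $k$-subset according to whether it contains one prescribed marble, which together telescope $\Exp\bigl((H_i-\mu)^+\bigr)=\sum_{m>\mu}(m-\mu)\Prb(H_i=m)$ into the displayed closed form.

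Since $r+1>\mu=\tfrac{ik}{n}$ we have $n-i-k+r+1>\tfrac{(n-i)(n-k)}{n}$; substituting this into the identity and multiplying by $\tfrac{n}{2i(n-i)}$ reduces the desired inequality~\eqref{eqx6} to the point-probability estimate $\Prb(H_i'=r)\ge\tfrac{e^{-1/3}}{8\sqrt{2\pi}}\sqrt{\tfrac{n}{k(n-k)}}$. To prove this I would first reduce the parameter ranges. The left-hand side of~\eqref{eqx6} is unchanged under $i\mapsto n-i$ (swap black and white marbles) and under $k\mapsto n-k$ (swap ``drawn'' and ``not drawn''), while its right-hand side is symmetric in $k$ and $n-k$ and independent of $i$; so I may assume $i\le n/2$ and $k\le n/2$. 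Moreover $\Prb(H_i'=r)$ is symmetric in $i$ and $k$, and $x(n-x)$ is increasing on $[0,n/2]$, so with $j=\min\{i,k\}$ it suffices to show $\Prb(H_i'=r)\ge\tfrac{e^{-1/3}}{8\sqrt{2\pi}}\sqrt{\tfrac{n}{j(n-j)}}$; relabelling $i$ and $k$ if necessary, I may therefore assume $1\le i\le k\le n/2$ and aim for $\Prb(H_i'=r)\ge\tfrac{e^{-1/3}}{8\sqrt{2\pi}}\sqrt{\tfrac{n}{i(n-i)}}$.

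The last estimate splits into cases. If $i=1$, then $H_i'$ is degenerate at $0$ and $\mu=\tfrac kn<1$ forces $r=0$, so $\Prb(H_i'=0)=1$. If $i\ge2$ and $r=0$ (equivalently $ik<n$), then
\[
\Prb(H_i'=0)=\prod_{j=0}^{k-2}\Bigl(1-\frac{i-1}{n-1-j}\Bigr)\ \ge\ \Bigl(1-\frac{i-1}{n-k+1}\Bigr)^{k-1},
\]
and using $i-1<\tfrac nk$ together with $n-k+1\ge\tfrac n2$ one checks that the right-hand side exceeds an absolute constant, which is in turn at least $\sqrt2\cdot\tfrac{e^{-1/3}}{8\sqrt{2\pi}}\ge\tfrac{e^{-1/3}}{8\sqrt{2\pi}}\sqrt{\tfrac{n}{i(n-i)}}$ (because $i(n-i)\ge n/2$ here). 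The remaining case $i\ge2$, $r\ge1$ (whence also $k\ge2$) is the crux. Here I would estimate $\Prb(H_i'=r)=\binom{i-1}{r}\binom{n-i}{k-1-r}\big/\binom{n-1}{k-1}$ by applying Stirling's formula with the explicit Robbins bounds $\tfrac1{12m+1}<\log\!\bigl(m!/(\sqrt{2\pi m}\,(m/e)^m)\bigr)<\tfrac1{12m}$ to each of the nine factorials involved: the polynomial factors combine into $\tfrac1{\sqrt{2\pi\operatorname{Var}(H_i')}}$ up to a bounded factor (using that $r=\lfloor\mu\rfloor$ differs from $\mu$, hence from $\Exp(H_i')$, by only $O(1)$, so the Gaussian exponent costs only a bounded amount), the residual error terms contribute the factor $e^{-1/3}$, and the elementary inequality $\operatorname{Var}(H_i')\le 2\,\tfrac{i(n-i)}n$ (valid for $2\le i\le k\le n/2$) converts $\tfrac1{\sqrt{\operatorname{Var}(H_i')}}$ into a constant multiple of $\sqrt{\tfrac n{i(n-i)}}$; a short further sub-case handles those $(i,k)$ for which $\operatorname{Var}(H_i')$ lies below an absolute threshold, where $H_i'$ has short support containing $r$ in its bulk and $\Prb(H_i'=r)$ is bounded below by an absolute constant outright.

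I expect this final Stirling computation to be the main obstacle: one must organize the nine factorial error contributions so that they assemble into precisely $e^{-1/3}$, bound the Gaussian exponent uniformly across all parameter regimes (so that $r$ being only $O(1)$ from the mean of $H_i'$ translates into a universal multiplicative loss), and fit the short-support sub-case into the same accounting. By contrast, the size-biasing identity, the symmetry reductions and the $r=0$ estimate should be comparatively routine.
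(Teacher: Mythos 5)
Your proposal follows essentially the same route as the paper: your mean-deviation identity is the size-biased form of the Ramasubban formula $\Exp\left(|H-ik/n|\right)=\frac{2m}{n}(n-i-k+m)\Prb(H=m)$ that the paper invokes, and the paper likewise obtains the required point-probability lower bound from Robbins' version of Stirling's formula (with the same $e^{-1/3}$ error budget and the same nine-factorial bookkeeping) before disposing separately of the boundary regimes where $ik/n\le 1$ or $ik/n>\min\{i,k\}-1$. The Stirling computation and short-support sub-cases you defer as the main obstacle are precisely the content of the paper's Lemmas~\ref{integer_part}, \ref{mad_lb} and \ref{mad_lb1}, so your plan is sound and matches the published argument in all essentials.
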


\noindent
The remaining part of this section is devoted to the proof of Lemma~\ref{mad_hyp}. 
We break the proof in several lemmata.  
We begin with a result that provides a lower bound on the probability that a hypergeometric random variable takes the  value that is right above its mean. 

\begin{lemma}\label{integer_part}
Let $H\sim\text{Hyp}(n,i,k)$ be such that $\frac{ik}{n} \in [m-1,m]$, for some  $m \in \{2,\ldots, \min\{i,k\}-1\}$.  
Then
\[
\Prb(H = m) \ge \frac{e^{-1/3}}{16\sqrt{2\pi}} \cdot \sqrt{\frac{i(n-i)k(n-k)}{m(i-m)(k-m)(n-i-k+m) n}} \, .
\]
\end{lemma}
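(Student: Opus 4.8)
The plan is to estimate $\Prb(H=m)$ using the explicit formula $\Prb(H=m)=\binom{i}{m}\binom{n-i}{k-m}/\binom{n}{k}$ together with Stirling's approximation. Since $\frac{ik}{n}\in[m-1,m]$, the value $m$ sits essentially at (just above) the mode of the hypergeometric distribution, so this is the ``bulk'' probability and one expects it to be of order the reciprocal of the standard deviation, which is $\sqrt{\tfrac{ik(n-i)(n-k)}{n^2(n-1)}}$; the quantity under the square root in the statement is exactly (a slight variant of) this, rewritten using $m\approx\frac{ik}{n}$, $i-m\approx\frac{i(n-k)}{n}$, $k-m\approx\frac{k(n-i)}{n}$, $n-i-k+m\approx\frac{(n-i)(n-k)}{n}$.

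First I would write $\Prb(H=m)=\frac{i!\,(n-i)!\,k!\,(n-k)!}{n!\,m!\,(i-m)!\,(k-m)!\,(n-i-k+m)!}$ and apply a two-sided Stirling bound of the form $\sqrt{2\pi}\,N^{N+1/2}e^{-N}\le N!\le e\,N^{N+1/2}e^{-N}$ (or the sharper $N!=\sqrt{2\pi N}\,(N/e)^N e^{\theta_N}$ with $\frac{1}{12N+1}<\theta_N<\frac{1}{12N}$) to each factorial. The exponential factors $e^{-N}$ all cancel because the arguments in the numerator and denominator sum to the same total ($i+(n-i)+k+(n-k)=2n=n+m+(i-m)+(k-m)+(n-i-k+m)$). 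Collecting the $\sqrt{2\pi}$ powers and the $N^{N+1/2}$ powers, one is left with a product of an explicit algebraic prefactor $\frac{1}{\sqrt{2\pi}}\sqrt{\frac{i(n-i)k(n-k)}{m(i-m)(k-m)(n-i-k+m)n}}$ times a ``shape'' factor of the form $\prod N^N$ (a ratio of such terms) times a controlled exponential error coming from the $\theta_N$'s.

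The main work, and the main obstacle, is bounding the shape factor $\frac{i^i(n-i)^{n-i}k^k(n-k)^{n-k}}{n^n m^m(i-m)^{i-m}(k-m)^{k-m}(n-i-k+m)^{n-i-k+m}}$ from below by an absolute constant (I would aim for something like $\ge \tfrac14 e^{-1/3}$, which together with the Stirling error gives the claimed $\tfrac{e^{-1/3}}{16\sqrt{2\pi}}$). To do this I would take logarithms and recognize the log-shape-factor as (a multiple of) a Kullback--Leibler-type expression: writing $p=\frac{i}{n}$, $q=\frac{k}{n}$ and $\mu=\frac{m}{n}$, it is $-n\big[D(\mu\|pq)+\cdots\big]$, which vanishes when $m=\frac{ik}{n}$ exactly and is controlled by a second-order Taylor expansion otherwise. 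Since $|m-\frac{ik}{n}|\le 1$, the deviation of $\mu$ from $pq$ is $O(1/n)$, so the log-shape-factor is $O(1/n)$ in the relevant regime and bounded below by a small absolute constant; the restriction $m\in\{2,\ldots,\min\{i,k\}-1\}$ guarantees all six factorial arguments in the denominator are at least $1$, so no factor degenerates and Stirling applies to each. I would then combine the Stirling multiplicative error (a factor $e^{-O(1/m)-O(1/(i-m))-\cdots}$, which I bound below crudely by another absolute constant) with the shape-factor bound to extract the final constant $\frac{e^{-1/3}}{16\sqrt{2\pi}}$, carefully checking that the crude constants multiply to at least this value. A clean alternative for the shape factor, if the Taylor bookkeeping is messy, is to use the convexity inequality $x^x y^y \ge \left(\frac{x+y}{2}\right)^{x+y}$-type estimates or the bound $\binom{a}{b}\ge \frac{1}{a+1}\,2^{a H(b/a)}$ pairings, but I expect the KL/Taylor route to give the sharpest absolute constant with the least case analysis.
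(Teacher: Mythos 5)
Your overall strategy coincides with the paper's: write $\Prb(H=m)$ as a ratio of factorials, apply Robbins' form of Stirling's formula, and split the result into the algebraic prefactor $\tfrac{1}{\sqrt{2\pi}}\sqrt{\cdots}$, a shape factor $C=\tfrac{i^i(n-i)^{n-i}k^k(n-k)^{n-k}}{m^m(i-m)^{i-m}(k-m)^{k-m}(n-i-k+m)^{n-i-k+m}n^n}$, and an exponential error term; the paper bounds the error by $e^{-1/3}$ exactly as you propose, and bounds $C\ge\tfrac1{16}$ by factoring it into four terms controlled by the monotonicity of $(1-\tfrac1s)^s$, rather than by your Kullback--Leibler/Taylor route. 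In the interior case $n-i-k+m\ge2$ your route does work and even gives a better constant: for $\xi$ between $\tfrac{ik}{n}$ and $m$ each of $\xi$, $i-\xi$, $k-\xi$, $n-i-k+\xi$ is at least $1$ (the last because $n-i-k+\xi\ge\tfrac{(n-i)(n-k)}{n}=(n-i-k+m)-(m-\tfrac{ik}{n})\ge 2-1$), so the second derivative of $\log C$ in the continuous variable $\xi$ replacing $m$ is at least $-4$; since $\log C$ and its first derivative vanish at $\xi=\tfrac{ik}{n}$ and $|m-\tfrac{ik}{n}|\le1$, this yields $C\ge e^{-2}>\tfrac1{16}$.

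The genuine gap is the boundary case $n-i-k+m\le1$, which is compatible with all stated hypotheses and which your argument does not survive. Your assertion that the hypotheses force every factorial argument to be at least $1$ needs a separate justification for $n-i-k+m$, and in any event ``at least $1$'' is not enough: take $i=k=n-2$, $m=n-3$, so that $\tfrac{ik}{n}=n-4+\tfrac4n\in[m-1,m]$, $m=\min\{i,k\}-1$, and $n-i-k+m=1$. Then $n-i-k+\xi$ equals $\tfrac{(n-i)(n-k)}{n}=\tfrac4n$ at $\xi=\tfrac{ik}{n}$, the term $\tfrac{1}{n-i-k+\xi}$ in your Taylor remainder is of order $n$, and indeed $C\sim\tfrac{16}{en}=\Theta(1/n)$; so the claim that $\log C=O(1/n)$, and any claim that $C$ is bounded below by an absolute constant, is false here. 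Nor is this a repairable bookkeeping issue: for $n=200$, $i=k=198$, $m=197$ one has $\Prb(H=197)=\tfrac{396}{19900}\approx0.0199$ while the right-hand side of the lemma is $\approx0.0356$, so the stated inequality itself fails in this regime and any proof must exclude it or weaken the conclusion. (The paper's own separate treatment of $n-i-k+m=1$ rests on the step $\tfrac{(k-m+1)(i-m+1)}{k+i-m+1}\ge1$, which is equivalent to $(i-m)(k-m)\ge m$ and also fails in the example above, so this boundary case appears to be a genuine defect of the statement rather than only of your proposal.)
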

\begin{proof}
Observe  that 
\begin{equation}\label{m1}
 \Prb(H = m) =   \frac{\binom{i}{m}\cdot\binom{n-i}{k-m}}{\binom{n}{k}} = \frac{i! \cdot (n-i)!\cdot k!\cdot (n-k)!}{m!\cdot (i-m)!\cdot (k-m)!\cdot (n-i-k+m)!\cdot n!} \, .
\end{equation}
We distinguish three cases. 
Suppose first that the parameters $n,i,k,m$ satisfy $n-i-k+m\ge 2$.

We employ Robbins' version of Stirling's formula (see~\cite{Robbins}), which states that for all positive integers $n$ it holds 
\[
\sqrt{2\pi n} \left(\frac{n}{e}\right)^n e^{\frac{1}{12n+1}} < n! < \sqrt{2\pi n} \left(\frac{n}{e}\right)^n e^{\frac{1}{12n}} \, .
\]
This implies that the right hand side of~\eqref{m1} is larger than 
\begin{equation}\label{m2}
    A = \frac{e^{B}}{\sqrt{2\pi}}\cdot \sqrt{\frac{i(n-i)k(n-k)}{m(i-m)(k-m)(n-i-k+m) n}} \cdot C \, ,
\end{equation}
where 
\begin{eqnarray*}
B &=& \frac{1}{12i +1} + \frac{1}{12(n-i) +1} + \frac{1}{12k +1}+\frac{1}{12(n-k) +1} \\
&-& \frac{1}{12m} - \frac{1}{12(i-m)} - \frac{1}{12(k-m)} - \frac{1}{12(n-i-k+m)} - \frac{1}{12n} 
\end{eqnarray*}
and 
\[
C = \frac{i^i (n-i)^{n-i}k^k(n-k)^{n-k}}{m^m (i-m)^{i-m} (k-m)^{k-m}(n-i-k+m)^{n-i-k+m} n^n} \, .
\]
We first estimate $C$ from below, by showing that $C\ge \frac{1}{16}$. We then show that $B\ge -1/3$. To this end, 
note that we may write 
\begin{eqnarray*}
C &=& \frac{i^i (n-i)^{n-i}k^k(n-k)^{n-k}}{m^m (i-m)^{i-m} (k-m)^{k-m}(n-i-k+m)^{n-i-k+m} n^n} \\ 
&=&  \left(\frac{ik}{n m}\right)^m \cdot \left(\frac{k(n-i)}{n (k-m)}\right)^{k-m}
\cdot \left(\frac{i(n-k)}{(i-m)n}\right)^{i-m} \left(\frac{n-i}{n-i-k+m}\cdot\frac{n-k}{n}\right)^{n-i-k+m} \, .
\end{eqnarray*}
Since $\Exp(H)\in (m-1,m]$, it follows that $m-1<\frac{ik}{n}\le m$ and thus it holds 
\[
 \frac{ik}{n m} \ge  \frac{m-1}{m}
 \quad , \quad \frac{k(n-i)}{n(k-m)} \ge 1  \quad  \text{ and } \quad \frac{i(n-k)}{(i-m)n}\ge 1 \, .
\]
Moreover, the fact that $\frac{ik}{n}\ge m-1$ implies that  
\begin{equation}\label{m3}
\frac{n-i}{n-i-k+m} \ge \left(1 - \frac{1}{n-i-k+m}\right) \cdot \frac{n}{n-k} \,  ,
\end{equation}
and thus we conclude that 
\[
C \ge \left(1-\frac{1}{m}\right)^m \left(1 - \frac{1}{n-i-k+m}\right)^{n-i-k+m} \, .
\]
Since $m\ge 2$ and $n-i-k+m \ge 2$, by assumption,  the fact that the sequence $\{(1-\frac{1}{s})^s\}_{s\ge 1}$ is increasing implies   that  $C\ge \frac{1}{4}\cdot \frac{1}{4} = \frac{1}{16}$, as desired. 

Now we estimate $B$ from below. Since it clearly holds 
$\frac{1}{12i +1} - \frac{1}{12n}\ge 0$ and $\frac{1}{12 s}\le \frac{1}{12}$, for $s \in \{m,i-m,k-m,n-i-k+m\}$, we conclude that 
$B \ge -\frac{4}{12} = -\frac{1}{3}$. The result follows. 

Assume now that the parameters $n,i,k,m$ satisfy 
$n-i-k+m=0$. In this case it holds $n-i=k-m$ and thus we may write 
\[
\Prb(H=m) = \frac{\binom{i}{m}\binom{k-m}{k-m}}{\binom{i+k-m}{k}} 
\]
and $H\sim\text{Hyp}(i+k-m,i,k)$. 
Now it is easy to verify that it holds  $\frac{ik}{i+k-m} \ge m$. Since it also holds $\frac{ik}{i+k-m} \le m$, by assumption, we deduce that $i=m$. 
Hence  $\Prb(H=m)=1$. 

Assume now that the parameters $n,i,k,m$ satisfy 
$n-i-k+m=1$. In this case the proof is almost same as in the first case, and so we only sketch it.  Instead of~\eqref{m3}, we estimate  
\[
C \ge \frac{1}{4} \cdot \frac{(n-i)(n-k)}{n}= \frac{1}{4} \cdot \frac{(k-m+1)(i-m+1)}{k+i-m+1} \ge \frac{1}{4} \, .
\] 
In all cases, it holds  
\[
\Prb(H=m) \ge \frac{e^{-1/3}}{4\sqrt{2\pi}} \cdot \sqrt{\frac{i(n-i)k(n-k)}{m(i-m)(k-m)(n-i-k+m)n}} \, ,
\]
as desired. 
\end{proof}

The next result establishes~\eqref{eqx6} when $\frac{ik}{n}$ is neither too small nor too large. 

\begin{lemma}\label{mad_lb}
Let $H\sim\text{Hyp}(n,i,k)$ be such that $\frac{ik}{n} \in [m-1,m]$, for some $m \in \{2,\ldots, \min\{i,k\}-1\}$.  
Then 
\[
\frac{n}{2i(n-i)}\cdot\Exp(|H-ik/n|) \ge \frac{e^{-1/3}}{8\sqrt{2\pi}} \cdot \sqrt{\frac{k}{n}}\cdot \frac{\sqrt{n-k}}{n} \, .
\]  
\end{lemma}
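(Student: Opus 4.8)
The plan is to reduce everything to two ingredients: an exact de Moivre--type evaluation of $\Exp|H-\tfrac{ik}{n}|$ in terms of $\Prb(H=m)$, and then Lemma~\ref{integer_part}. Write $\mu=\tfrac{ik}{n}=\Exp(H)$. First I would record two elementary facts. Since $m\le\min\{i,k\}-1$ we have $i-m\ge 1$ and $k-m\ge 1$; and, expanding the inequality $ik\le mn$, the hypothesis $\mu\le m$ is \emph{equivalent} to $(i-m)(k-m)\le m(n-i-k+m)$, which in particular forces $n-i-k+m\ge 1$ and guarantees $\Prb(H=m)>0$.

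Step 1 (identity for the hypergeometric mean absolute deviation). I would first prove that
\[
\Exp\bigl|H-\mu\bigr| \;=\; \frac{2m(n-i-k+m)}{n}\,\Prb(H=m)\, .
\]
Since $\Exp(H)=\mu$, one has $\Exp|H-\mu|=2\Exp(H-\mu)^+=2\sum_{j\ge m}(j-\mu)\Prb(H=j)$. Writing $p_j=\Prb(H=j)$ and using $(j+1)\binom{i}{j+1}=i\binom{i-1}{j}$ together with $\binom{n}{k}=\tfrac{n}{k}\binom{n-1}{k-1}$, one obtains the shift identity $(j+1)p_{j+1}=\mu\,q_j$, where $q_j$ is the pmf of $H'\sim\text{Hyp}(n-1,i-1,k-1)$. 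Summing this from $j=m-1$ onward gives $\sum_{j\ge m}j\,p_j=\mu\,\Prb(H'\ge m-1)$, hence $\Exp(H-\mu)^+=\mu\bigl(\Prb(H'\ge m-1)-\Prb(H\ge m)\bigr)$. The remaining point is to evaluate $\Prb(H'\ge m-1)-\Prb(H\ge m)$, which I would do by a short coupling: distinguish one special marble and condition on whether it is drawn, giving $\Prb(H\ge m)=\tfrac{k}{n}\Prb(H'\ge m-1)+\tfrac{n-k}{n}\Prb(\widetilde H\ge m)$ with $\widetilde H\sim\text{Hyp}(n-1,i-1,k)$; a further one-draw coupling of $\widetilde H$ with $H'$ then yields $\Prb(H'\ge m-1)-\Prb(\widetilde H\ge m)=\tfrac{n-i-k+m}{n-k}\Prb(H'=m-1)$. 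Combining, $\Exp(H-\mu)^+=\tfrac{n-i-k+m}{n}\,\mu\,\Prb(H'=m-1)$, and the shift identity at $j=m-1$ gives $m\,p_m=\mu\,q_{m-1}$, i.e. $\Prb(H'=m-1)=\tfrac{m}{\mu}\Prb(H=m)$, which produces the displayed identity. (This is the hypergeometric analogue of De Moivre's mean-absolute-deviation formula for the binomial and could be cited rather than reproved.)

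Step 2 (substitute and optimize). From the identity, $\frac{n}{2i(n-i)}\Exp|H-\mu|=\frac{m(n-i-k+m)}{i(n-i)}\Prb(H=m)$. Applying Lemma~\ref{integer_part} and simplifying the product of square roots, this is at least
\[
\frac{e^{-1/3}}{16\sqrt{2\pi}}\sqrt{\frac{m(n-i-k+m)\,k(n-k)}{(i-m)(k-m)\,i(n-i)\,n}}\, .
\]
It then remains to check that this quantity is at least $\frac{e^{-1/3}}{8\sqrt{2\pi}}\sqrt{\tfrac{k(n-k)}{n^{3}}}$, which after squaring and cancelling the common factor $\tfrac{e^{-2/3}}{2\pi}k(n-k)$ is precisely the inequality
\[
n^{2}\,m(n-i-k+m)\;\ge\;4\,i(n-i)(i-m)(k-m)\, .
\]
I would establish this in two trivial moves and multiply them: $n^{2}\ge 4i(n-i)$ (AM--GM), and $m(n-i-k+m)\ge(i-m)(k-m)$, which is exactly the hypothesis $\mu\le m$ recorded at the outset. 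This completes the proof.

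The main obstacle is Step~1: once the identity is in hand, the rest is a one-line substitution plus the AM--GM/$\mu\le m$ inequality, but the identity itself needs either the couplings sketched above or a suitable reference. Note that the crude bound $\Exp|H-\mu|\ge\operatorname{Var}(H)/\sup|H-\mu|$ is \emph{not} sufficient here, since it is off by roughly the number of standard deviations spanned by the support of $H$; one genuinely needs the exact evaluation of $\Exp(H-\mu)^+$.
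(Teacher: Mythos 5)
Your proof is correct and follows essentially the same route as the paper: both hinge on the identity $\Exp|H-\tfrac{ik}{n}|=\tfrac{2m}{n}(n-i-k+m)\Prb(H=m)$ (which the paper simply cites from Ramasubban, Biometrika 1958, rather than deriving via your couplings) followed by an application of Lemma~\ref{integer_part} and elementary bounds. Your final reduction to $n^{2}m(n-i-k+m)\ge 4i(n-i)(i-m)(k-m)$, split into AM--GM and the observation that $\mu\le m$ is equivalent to $m(n-i-k+m)\ge(i-m)(k-m)$, is a slightly cleaner packaging of the same term-by-term estimates the paper performs.
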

\begin{proof}
We  use the following expression for the mean absolute deviation of a hypergeometric random variable (see~\cite[Formula~(3.3), p.554]{ramasubban1958mean}):
\[
\Exp(|H-ik/n|) = \frac{2m}{n}\cdot (n-i-k+m)\cdot\Prb(H=m) \, .
\]
Hence we may write  
\[
A_i := \frac{n}{2i(n-i)}\cdot\Exp(|H-ik/n|) = \frac{m}{i}\cdot \frac{n-i-k+m}{n-i}\cdot \Prb(H=m)
\]
and Lemma~\ref{integer_part} yields
\begin{eqnarray*}
A_i &\ge& \frac{e^{-1/3}}{16\sqrt{2\pi}} \cdot  \frac{m}{i}\cdot \frac{n-i-k+m}{n-i}\cdot  \sqrt{\frac{i(n-i)k(n-k)}{m(i-m)(k-m)(n-i-k+m) n}}\\
&=& \frac{e^{-1/3}}{16\sqrt{2\pi}} \cdot  \sqrt{\frac{m(n-i-k+m) k (n-k)}{i(n-i)(i-m)(k-m)n}} \, .
\end{eqnarray*}
Now observe that $\frac{n-i-k+m}{n-i} \ge \frac{n-i-k+\frac{ik}{n}}{n-i} = \frac{n-k}{n}$, which in turn implies that 
\[
A_i \ge \frac{e^{-1/3}}{16\sqrt{2\pi}} \cdot  \frac{n-k}{n} \cdot\sqrt{\frac{m}{i}}\cdot\sqrt{\frac{k}{(i-m)(k-m)}} .
\]
Moreover, since $m\ge\frac{ik}{n}$, it holds $\frac{m}{i} \ge\frac{k}{n}$ and 
\[
(i-m)\cdot (k-m) \le (i-ik/n)\cdot (k-ik/n) = \frac{i k (n-i)(n-k)}{n^2} \, .
\]
Hence 
\[
A_i \ge \frac{e^{-1/3}}{16\sqrt{2\pi}} \cdot  \frac{n-k}{n} \cdot\sqrt{\frac{k}{n}}\cdot \sqrt{\frac{n^2}{i(n-i)(n-k)}} \ge \frac{e^{-1/3}}{16\sqrt{2\pi}} \cdot  \frac{n-k}{n} \cdot \sqrt{\frac{k}{n}}\cdot\sqrt{\frac{4}{n-k}} \, ,
\]
where the last inequality follows from the fact that $i(n-i)\le n^2/4$. 
The result follows. 
\end{proof}

The remaining cases, i.e., when $\frac{ik}{n}$ is either too small or too large, are considered in the next result, which finishes the proof of Lemma~\ref{mad_hyp} and the proof  of Theorem~\ref{main_thm}.

\begin{lemma}\label{mad_lb1}
Let $H_i\sim\text{Hyp}(n,i,k)$ be such that $\frac{ik}{n} \in (0,1] \cup (\min\{i,k\}-1, \min\{i,k\}]$.   
Then 
\[
\frac{n}{2i(n-i)}\cdot\Exp(|H_i-ik/n|)\, \ge \, \frac{e^{-1/3}}{8\sqrt{2\pi}} \cdot \sqrt{\frac{k}{n}}\cdot \frac{\sqrt{n-k}}{n} \, .
\]    
\end{lemma}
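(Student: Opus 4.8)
The plan is to collapse both intervals in the hypothesis to the single condition $ik\le n$, and then to reduce the estimate of $\Exp(|H_i-ik/n|)$ to a lower bound on a single point probability of a slightly smaller hypergeometric.

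\textbf{Step 1 (symmetry reduction).} Write $A_i=\frac{n}{2i(n-i)}\,\Exp(|H_i-ik/n|)$ for the left-hand side of the claimed inequality. If $H_i\sim\text{Hyp}(n,i,k)$, then $k-H_i\sim\text{Hyp}(n,n-i,k)$ and $i-H_i\sim\text{Hyp}(n,i,n-k)$, while
\[
\big|H_i-\tfrac{ik}{n}\big|=\big|(k-H_i)-\tfrac{(n-i)k}{n}\big|=\big|(i-H_i)-\tfrac{i(n-k)}{n}\big|
\]
pointwise; hence $A_i$ is unchanged when $i$ is replaced by $n-i$ and when $k$ is replaced by $n-k$, and the right-hand side $\frac{e^{-1/3}}{8\sqrt{2\pi}}\cdot\frac{\sqrt{k(n-k)}}{n\sqrt n}$ is visibly invariant under the same substitutions. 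Now suppose $ik>n$; then we must be in the second interval, with $\min\{i,k\}\ge2$. If $\min\{i,k\}=k$, the inequality $\frac{ik}{n}>k-1$ rearranges to $(n-i)k<n$, so replacing $i$ by $n-i$ puts us in the range $ik\le n$; if $\min\{i,k\}=i$, then $\frac{ik}{n}>i-1$ rearranges to $i(n-k)<n$, so replacing $k$ by $n-k$ does the same. Thus it suffices to prove the inequality when $ik\le n$.

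\textbf{Step 2 (reduction to $\Prb(H_i=0)$).} Assume $ik\le n$ and set $\mu=ik/n\le1$. Since $H_i$ is a nonnegative integer, $(\mu-H_i)^+$ equals $\mu$ if $H_i=0$ and $0$ otherwise, so $\Exp((\mu-H_i)^+)=\mu\,\Prb(H_i=0)$; combined with $\Exp((H_i-\mu)^+)=\Exp((\mu-H_i)^+)$ (which holds because $\Exp(H_i-\mu)=0$) this gives $\Exp(|H_i-\mu|)=2\mu\,\Prb(H_i=0)$. Using $\Prb(H_i=0)=\binom{n-i}{k}/\binom{n}{k}$ together with the absorption identities $\frac{k}{n-i}\binom{n-i}{k}=\binom{n-i-1}{k-1}$ and $\binom{n}{k}=\frac{n}{k}\binom{n-1}{k-1}$, we obtain
\[
A_i=\frac{n}{2i(n-i)}\cdot 2\cdot\frac{ik}{n}\cdot\frac{\binom{n-i}{k}}{\binom{n}{k}}=\frac{k}{n-i}\cdot\frac{\binom{n-i}{k}}{\binom{n}{k}}=\frac{k}{n}\cdot\frac{\binom{n-i-1}{k-1}}{\binom{n-1}{k-1}}.
\]
I would also note that $ik\le n$ forces $i+k\le n$: this is clear if $\min\{i,k\}=1$, and otherwise $i+k\le ik\le n$.

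\textbf{Step 3 (an elementary estimate).} Writing the last ratio as a product of $i$ factors and bounding each below by the last (smallest) one,
\[
\frac{\binom{n-i-1}{k-1}}{\binom{n-1}{k-1}}=\prod_{j=0}^{i-1}\Big(1-\frac{k-1}{n-1-j}\Big)\ \ge\ \Big(1-\frac{k-1}{n-i}\Big)^{\!i},
\]
which is valid because $i+k\le n$ makes every factor strictly positive. If $i\ge2$, then $ik\le n$ gives $i(k-1)\le n-i$, i.e. $\frac{k-1}{n-i}\le\frac1i$, so the product is at least $(1-\frac1i)^i\ge\frac14$ (the sequence $(1-1/m)^m$ is increasing in $m$ and equals $\frac14$ at $m=2$); hence $A_i\ge\frac{k}{4n}$. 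Since $\frac{e^{-1/3}}{8\sqrt{2\pi}}<\frac14$ and $\sqrt{k(n-k)}\le\sqrt{kn}\le k\sqrt n$, the right-hand side of the lemma is at most $\frac14\cdot\frac{\sqrt{kn}}{n\sqrt n}=\frac{\sqrt k}{4n}\le\frac{k}{4n}\le A_i$, as desired. Finally, if $i=1$ the product reduces to the single factor $1-\frac{k-1}{n-1}=\frac{n-k}{n-1}$, so $A_1=\frac{k(n-k)}{n(n-1)}$, and the required inequality becomes $\sqrt{n\,k(n-k)}\ge\frac{e^{-1/3}}{8\sqrt{2\pi}}\,(n-1)$, which holds because $k(n-k)\ge n-1$ (as $1\le k\le n-1$) and $\frac{e^{-1/3}}{8\sqrt{2\pi}}<1$.

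\textbf{Expected main obstacle.} The substantive point is the identity in Step 2 rather than any single inequality. Working directly with $\Prb(H_i=0)=\binom{n-i}{k}/\binom{n}{k}$ one gets only $\Prb(H_i=0)\ge(1-\tfrac{k}{n-i+1})^i$, which deteriorates badly for large $i$; it is the passage to the ``one-step-smaller'' ratio $\binom{n-i-1}{k-1}/\binom{n-1}{k-1}$ that makes the crude smallest-factor bound mesh with the hypothesis $ik\le n$ and produce the absolute constant $\frac14$. One also has to separate the degenerate case $i=1$, where $(1-1/i)^i=0$ and a direct computation is needed.
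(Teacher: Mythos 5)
Your proof is correct, and it takes a genuinely different route from the paper's. The paper handles the bottom interval $\frac{ik}{n}\in(0,1]$ by a case split ($\frac{ik}{n}\le\frac12$, $\frac12<\frac{ik}{n}<1$, and $\frac{ik}{n}=1$, the last of which invokes Siegel's median result for the hypergeometric), and handles the top interval by passing to $W_i=k-H_i$ and then falling back on Lemma~\ref{mad_lb}, whose proof rests on Robbins--Stirling estimates. You instead exploit \emph{both} symmetries $i\mapsto n-i$ and $k\mapsto n-k$ of the quantity $\frac{n}{2i(n-i)}\Exp(|H_i-ik/n|)$ to fold the entire top interval into the regime $ik\le n$, and there you observe that $\Exp(|H_i-ik/n|)=2\frac{ik}{n}\Prb(H_i=0)$ (a special case of Ramasubban's identity that follows directly from $\Exp(X^+)=\Exp(X^-)$, since the mean is at most $1$), which collapses the left-hand side to the exact closed form $\frac{k}{n}\binom{n-i-1}{k-1}/\binom{n-1}{k-1}$. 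The smallest-factor bound on that product, together with the implication $ik\le n\Rightarrow i(k-1)\le n-i$, then yields the uniform constant $\frac14$ for $i\ge2$, with $i=1$ done by direct computation. What your approach buys is self-containedness and a slightly better constant on this piece: it needs neither the median citation, nor the mean-absolute-deviation formula in its general form, nor any appeal to the Stirling-based Lemma~\ref{mad_lb} for the top interval, and it gives $\frac{k}{4n}$ where the paper's worst subcase gives $\frac{k}{8n}$. What the paper's route buys is that the same machinery (the $m$-th point probability and Ramasubban's formula) is shared with the middle-range Lemma~\ref{mad_lb}, which your argument does not replace. I verified the potentially delicate points: $ik\le n$ with $i,k\in[n-1]$ does force $i+k\le n$, so $\Prb(H_i=0)>0$ and all binomial coefficients and factors in your product are positive; and the rearrangements $\frac{ik}{n}>k-1\iff(n-i)k<n$ and $\frac{ik}{n}>i-1\iff i(n-k)<n$ are exactly what is needed to land in the reduced regime.
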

\begin{proof}
Suppose first that $\frac{ik}{n}\in (0,1]$ and 
observe that it holds  
\begin{eqnarray*}
\Exp(|H_i-ik/n|) &=& \frac{ik}{n}\cdot \Prb(H_i=0) + \left(1-\frac{ik}{n}\right) \cdot \Prb(H_i=1) +  \sum_{j\ge 2}\left(j-\frac{ik}{n}\right)\cdot \Prb(H_i=j) \\
&\ge&  \frac{ik}{n}\cdot \Prb(H_i=0) + \left(1-\frac{ik}{n}\right) \cdot \Prb(H_i\ge 1) \, .
\end{eqnarray*}
We first consider the case $\frac{ik}{n}<1$. 
In this case, we will show a bit more. Namely, we show that 
\begin{equation}\label{eqmad1}
\frac{n}{2i(n-i)}\cdot\Exp(|H_i-ik/n|)\ge \frac{1}{2}\cdot \frac{k}{n} \, ,\, \text{ when } \, \frac{ik}{n}<1 \, .
\end{equation}
The result then follows from the fact that $\frac{k}{n}\ge \sqrt{\frac{k}{n}}\cdot \frac{\sqrt{n-k}}{n}$. 
To this end, note that we may write 
\begin{eqnarray*}
A_i &:=& \frac{n}{2i(n-i)}\cdot\Exp(|H_i-ik/n|) \\ 
&\ge& \frac{n}{2i(n-i)} \cdot \left(\frac{ik}{n}\cdot \Prb(H_i=0) + \left(1-\frac{ik}{n}\right) \cdot \Prb(H_i\ge 1) \right) \, .
\end{eqnarray*}
We distinguish two cases. Assume first that $\frac{ik}{n}\le \frac{1}{2}$. Then $\frac{ik}{n} \le 1-\frac{ik}{n}$ and thus 
\[
A_i \ge \frac{n}{2i(n-i)} \cdot \frac{ik}{n}\cdot \Prb(H_i\ge 0) = \frac{k}{2(n-i)}\ge \frac{1}{2}\cdot \frac{k}{n} \, , 
\]
which proves~\eqref{eqmad1}. 
Now assume that $\frac{1}{2}< \frac{ik}{n} <  1$. Then, we have 
\[
A_i \ge \frac{n}{2i(n-i)}\cdot \frac{n-ik}{n}\cdot \Prb(H_i\ge 0) = 
\frac{n-ik}{2i(n-i)} \, .
\]
Now observe that, since $i\ge 1$, it holds  $2\frac{ik}{n} \le 1 + \frac{i^2k}{n}$; this implies that   $\frac{n-ik}{n-i}\ge \frac{ik}{n}$ and therefore $A_i\ge \frac{1}{2}\cdot \frac{k}{n}$, thus establishing~\eqref{eqmad1}. 

Now suppose that $\Exp(H_i)=\frac{ik}{n} =1$. 
We may assume that $i>1$; if $i=1$, then $k=n$ and $A_1 > \frac{1}{2}\cdot \frac{k}{n}$. 
Now observe that, since $i>1$, we may write 
\begin{eqnarray*}
A_i &:=&  \frac{ik}{2i(ik-i)}\cdot\left( \Prb(H_i=0) + \sum_{j\ge 2} (j-1)\cdot\Prb(H_i=j) \right) \\
&=& \frac{k}{2i(k-1)}\cdot\big( \Prb(H_i=0) + \Exp(H_i) -\Prb(H_i=1) -\Prb(H_i\ge 2) \big) \\
&=& \frac{k}{i(k-1)}\cdot  \Prb(H_i=0) 
\,=\, \frac{k}{i(k-1)}\cdot \frac{\binom{ik-i}{k}}{\binom{ik}{k}}
\,=\, \frac{ik-i-k+1}{i^2(k-1)} \cdot \Prb(H_i=1) \, .
\end{eqnarray*}
Since the median of $H_1$ is equal to its mean  (see~\cite{siegel2001median}), it follows that 
$\Prb(H_i \le 1)\ge \frac{1}{2}$, and so we either have  
$\Prb(H_i =0)\ge \frac{1}{4}$ or $\Prb(H_i =1)\ge \frac{1}{4}$.
If $\Prb(H_i =0)\ge \frac{1}{4}$, then 
\[
A_i = \frac{k}{i(k-1)}\cdot \Prb(H_i=0) \ge \frac{1}{4i}=\frac{1}{4}\cdot \frac{k}{n}\, . 
\]

If  $\Prb(H_i=1)\ge \frac{1}{4}$ then
\[
A_i = \frac{ik-i-k+1}{i^2(k-1)} \cdot \Prb(H_i=1)\ge \frac{1}{4}\cdot\frac{ik-i-k+1}{i^2(k-1)} = \frac{1}{4}\cdot\frac{i-1}{i}\cdot\frac{1}{i} \ge \frac{1}{8}\cdot\frac{1}{i}= \frac{1}{8}\cdot \frac{k}{n}\, , 
\]
since $i>1$. The result follows for the case $\frac{ik}{n}\in (0,1]$.

Now we consider the case $\frac{ik}{n} \in  (m-1, m]$, where $m=\min\{k,i\}$. Let $W_i = k - H_i$ and observe that $W_i\sim\text{Hyp}(n,n-i,k)$ and that 
\[
\Exp(|H_i - ik/n|) = \Exp(|W_i - (n-i)k/n|) \, .
\]
Since $m-1 < \frac{ik}{n} \le m$ it follows that $k-m \le \frac{(n-i)k}{n} < k-m+1$. If $k-m \ge 1$, then the result follows from Lemma~\ref{mad_lb} and the fact that $\Exp(|H_i - ik/n|) = \Exp(|W_i - (n-i)k/n|)$. 
If $k-m=0$, then the result follows from the case $\frac{ik}{n}\in (0,1]$ that has already been considered in the beginning of the proof. The result follows.  
\end{proof}

The proof of Theorem~\ref{main_thm} is complete. We conclude this section with a proof of Corollary~\ref{main_thm4}. 

\begin{proof}[Proof of Corollary~\ref{main_thm4}]
Consider the population $-P= (-x_1,\ldots,-x_n)$, and note that $-P\in B_{\alpha}^{(n)}$. We know from Theorem~\ref{main_thm} that $\Prb(X_{-P}>0) \ge \frac{e^{-1/3}}{8\sqrt{2\pi}} \cdot \frac{k}{n}\cdot \sqrt{\frac{n-k}{n\cdot k}}$. 
The result follows upon observing that  
\[
\Prb(X_P\ge 0) = \Prb(X_{-P}\le 0) = 1- \Prb(X_{-P}>0) \, .
\]
\end{proof}

\section{Proof of Theorem~\ref{main_thm2}}
\label{sec:2}

Let $\mathbf{1}_A$ denote the indicator of the event $A$. 
We begin with the following, well-known, result from the theory of majorization. 

\begin{lemma}\label{cvx_lemma}
Let $P_1,P_2\in\mathbb{R}^n$ be two populations such that $P_1\prec P_2$, and let $f$ be a continuous convex function. 
Then it holds 
\[
\Exp(f(X_{P_1})) \le \Exp(f(X_{P_2})) \, .
\]
\end{lemma}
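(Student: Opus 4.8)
The plan is to reduce the claim to a statement about sums of finitely many convex functions, exactly as in the proof of Lemma~\ref{Schur1}. Recall that $X_{P} = \sum_{i\in\mathbb{I}} x_i$ where $\mathbb{I}$ is a uniformly random $k$-subset of $[n]$. Hence, writing $\mathbf{x}=(x_1,\ldots,x_n)$ for the population, we have
\[
\Exp(f(X_{\mathbf{x}})) = \frac{1}{\binom{n}{k}}\sum_{F\in\binom{[n]}{k}} f\!\left(\sum_{i\in F} x_i\right) \, .
\]
So it suffices to prove that the map $g:\mathbb{R}^n\to\mathbb{R}$ defined by $g(\mathbf{x}) = \frac{1}{\binom{n}{k}}\sum_{F\in\binom{[n]}{k}} f\!\left(\sum_{i\in F} x_i\right)$ is Schur-convex on $\mathbb{R}^n$; applying this with $\mathbf{x}=P_1\prec P_2=\mathbf{y}$ then gives the result.

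First I would note that for each fixed $F\in\binom{[n]}{k}$, the linear map $\mathbf{x}\mapsto \sum_{i\in F}x_i$ composed with the convex function $f$ yields a convex function of $\mathbf{x}$ (composition of a convex function with an affine map is convex). Summing finitely many convex functions and dividing by a positive constant preserves convexity, so $g$ is convex on $\mathbb{R}^n$. Next, $g$ is symmetric under permutations of the coordinates, since permuting $(x_1,\ldots,x_n)$ merely permutes the collection $\binom{[n]}{k}$ and hence leaves the sum defining $g$ unchanged. A standard fact (the same one invoked in the proof of Lemma~\ref{Schur1}, see~\cite[p.~97]{Marshall_Olkin_Arnold}) states that a function that is both convex and symmetric is Schur-convex. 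Therefore $g$ is Schur-convex, which is exactly what is needed.

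The only point requiring a little care is the role of continuity of $f$: the cited characterization of Schur-convexity for symmetric convex functions is usually stated for convex functions on $\mathbb{R}^n$ (which are automatically continuous on the interior), so the continuity hypothesis in the statement is there mainly to make the composition $f(X_{P_j})$ unambiguous and measurable and to match the conventions of the reference; I would simply remark that it is harmless. I do not anticipate a genuine obstacle here — this lemma is ``well-known'' as the paper says, and the entire content is the two-line observation that an average of compositions $f\circ(\text{linear functional})$ over a permutation-symmetric family of functionals is a symmetric convex function, hence Schur-convex. The mild subtlety worth flagging is just to make sure the reader sees that Lemma~\ref{Schur1} is the special case $f(x)=|x|$, so the present lemma is a direct generalization proved by the same argument.
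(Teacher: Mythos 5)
Your proof is correct, but it takes a genuinely different route from the paper: the paper disposes of this lemma in one line by citing Karlin's theorem from~\cite[p.~455]{Marshall_Olkin_Arnold}, whereas you give a short self-contained argument by showing that $\mathbf{x}\mapsto\Exp(f(X_{\mathbf{x}}))=\frac{1}{\binom{n}{k}}\sum_{F}f\bigl(\sum_{i\in F}x_i\bigr)$ is a symmetric convex function of the population vector and invoking the standard fact that symmetric convex functions are Schur-convex. Every step checks out: each summand is a convex function composed with a linear functional, hence convex; the family $\binom{[n]}{k}$ is permutation-invariant, so the average is symmetric; and the citation to~\cite[p.~97]{Marshall_Olkin_Arnold} is the right one. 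Your observation that Lemma~\ref{Schur1} is exactly the special case $f(x)=|x|$ is accurate and arguably makes the paper's structure more transparent --- the authors could have stated your version once and derived Lemma~\ref{Schur1} from it. Your remark on continuity is also sound: finite convex functions on $\mathbb{R}$ are automatically continuous, so the hypothesis is not load-bearing in this argument. What the paper's citation buys is brevity and an appeal to a named classical result; what your argument buys is self-containedness and the explicit unification of the two Schur-convexity lemmas under one two-line mechanism.
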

\begin{proof}
This is a well-known result due to Karlin (see~\cite[p.~455]{Marshall_Olkin_Arnold}). 
\end{proof}

\begin{lemma}\label{y_lem}
Let $Y$ be a random variable such that $Y\in [-1,1]$, and let 
$y\in (0,1)$ be fixed. Let $f:[-1,1]\to [-1,1]$ be the function defined by 
\begin{equation*}
f(x)=\begin{cases}
\frac{1}{1-y}\cdot x + \frac{y}{1-y}, &\text{ when } x\in [-1\, ,\, 1-2y]\\
1, &\text{ when } x \in (1-2y\, ,\, 1].
\end{cases}
\end{equation*}
Then 
\[
\Prb(Y> -y) \ge \Exp(f(Y)) \, .
\]
\end{lemma}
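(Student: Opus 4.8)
The plan is to realize $f$ as a pointwise lower bound for the indicator of the event $\{x>-y\}$ on the interval $[-1,1]$, and then simply integrate against the law of $Y$.

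First I would record the elementary shape of $f$. Since $y\in(0,1)$, we have $1-2y\in(-1,1)$ and $-y\le 1-2y$, so the two pieces in the definition are non-degenerate and consistent. On $[-1,1-2y]$ the function $f(x)=\tfrac{1}{1-y}x+\tfrac{y}{1-y}$ is affine with positive slope $\tfrac{1}{1-y}$, and on $(1-2y,1]$ it is the constant $1$; hence $f$ is continuous, non-decreasing (indeed concave). One checks directly that $f(-1)=-1$, $f(-y)=\tfrac{-y+y}{1-y}=0$, and $f(1-2y)=\tfrac{1-y}{1-y}=1$, so that $f(x)\le 0$ for $x\le -y$ and $0\le f(x)\le 1$ for $x\ge -y$; in particular $f(x)\le 1$ for every $x\in[-1,1]$.

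Next I would verify the pointwise inequality $f(x)\le \mathbf{1}_{\{x>-y\}}$ for all $x\in[-1,1]$ by a two-case split. If $x\in[-1,-y]$, then $x\in[-1,1-2y]$ (because $-y\le 1-2y$), so $f(x)=\tfrac{x+y}{1-y}\le \tfrac{-y+y}{1-y}=0=\mathbf{1}_{\{x>-y\}}$, using monotonicity of $f$. If instead $x\in(-y,1]$, then $\mathbf{1}_{\{x>-y\}}=1$ while $f(x)\le 1$ by the previous paragraph. This establishes $f\le \mathbf{1}_{\{\cdot>-y\}}$ on $[-1,1]$.

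Finally, since $Y$ takes values in $[-1,1]$ by hypothesis, applying the pointwise bound gives $f(Y)\le \mathbf{1}_{\{Y>-y\}}$ almost surely; taking expectations yields $\Exp(f(Y))\le \Exp\big(\mathbf{1}_{\{Y>-y\}}\big)=\Prb(Y>-y)$, which is the claim. There is no serious obstacle here: the only point requiring a moment of care is checking that $-y\le 1-2y$ (so that the affine branch of $f$ is the one in force on all of $[-1,-y]$) and the case split itself. I should also note that the concavity of $f$ plays no role in this lemma, but it is exactly the property that will make $f$ the right majorant to feed into Lemma~\ref{cvx_lemma} when proving Theorem~\ref{main_thm2}.
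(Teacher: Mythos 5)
Your proof is correct and follows essentially the same route as the paper's: the paper simply asserts the pointwise bound $f(Y)\le \mathbf{1}_{\{Y>-y\}}$ and takes expectations, while you supply the (easy but worthwhile) verification of that inequality via the values $f(-1)=-1$, $f(-y)=0$, $f(1-2y)=1$ and monotonicity. No gaps.
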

\begin{proof}
Observe that the function $f$ is such that 
\[
\mathbf{1}_{\{Y> -y\}} \ge f(Y) \, .
\]
The result follows upon taking expectations. 
\end{proof}

\begin{proof}[Proof of Theorem~\ref{main_thm2}]
Consider the population $-P = (-x_1,\ldots,-x_n)$, and note that $-P\in B_{\alpha}^{(n)}$. Moreover, since $X_{-P} \sim -X_P$, we have  
\[
\Prb(X_P \ge t) = \Prb(-X_P \le -t) =\Prb(X_{-P} \le -t) = 1 - \Prb(X_{-P} > -t) \, ,
\]
and it is therefore enough to show that for any population, let us denote it again  
$P=(x_1,\ldots,x_n)$, which satisfies $\sum_i x_i=0$ and 
$\sum_i |x_i|=2\alpha$, for some $\alpha>0$, it holds 
\[
\Prb(X_P >- t) \ge \min\left\{1\, ,\, \frac{t}{\alpha -t}\right\}\cdot\left(1 - \frac{2k(n-k)}{n(n-1)}\right) \, .
\]
Set $Y_P = \frac{1}{\alpha}\cdot X_P$ and observe that $Y_P\in [-1,1]$. Hence Lemma~\ref{y_lem} yields 
\[
\Prb(X_P > -t) = \Prb\left(Y_P > -\frac{t}{\alpha}\right) \ge \Exp(f(Y_P)) \, ,
\]
where $f:[-1,1]\to [-1,1]$ is the function 
$f(x)=\begin{cases}
\frac{\alpha}{\alpha-t}\cdot x + \frac{t}{\alpha-t}, &\text{ when }\, x\in [-1\, ,\, 1-\frac{2t}{\alpha}]\\
1, &\text{ when }\, x \in (1-\frac{2t}{\alpha} \, ,\, 1]
\end{cases}$. 

Now observe that $P\prec P^*$, where $P^* = (\alpha,0,\ldots,0,-\alpha)\in B_{\alpha}^{(n)}$. 
Since the function $f$ is concave, it follows from Lemma~\ref{cvx_lemma} 
that $\Exp(f(Y_P)) \ge \Exp(f(Y_{P^*}))$. 
Hence it holds 
\[
\Prb(X_P > -t) \ge \Exp(f(Y_{P^*})) \, ,
\]
and we are left with calculating $\Exp(f(Y_{P^*}))$.
Now observe that $Y_{P^*} \in \{-1,0,1\}$ and that 
\[
\Prb(Y_{P^*}=1)=\Prb(Y_{P^*}=-1)= \frac{k(n-k)}{n(n-1)} \quad  \text{and} \quad
\Prb(Y_{P^*}=0) = 1 - \frac{2k(n-k)}{n(n-1)} \, 
\]

We distinguish two cases. 
Suppose first that $\frac{t}{\alpha-t} \ge 1$, so that $t\in [\alpha/2, \alpha)$. Observe that in this case it holds $f(0) =1\, , \, f(1)=1$ and $f(-1)=-1$; thus   
\begin{eqnarray*}
\Exp(f(Y_{P^*})) &=& f(-1)\cdot   \frac{k(n-k)}{n(n-1)} + f(0)\cdot  \left( 1 - \frac{2k(n-k)}{n(n-1)}\right) + f(1)\cdot  \frac{k(n-k)}{n(n-1)} \\ 
&=& 1 - \frac{2k(n-k)}{n(n-1)} \, .
\end{eqnarray*}

Now suppose that $\frac{t}{\alpha-t} < 1$, so that $t\in (0, \alpha/2)$. Then $f(0) =\frac{t}{\alpha-t}\, , \, f(1)=1$ and $f(-1)=-1$; hence   
\[
\Exp(f(Y_{P^*})) = \frac{t}{\alpha-t} \cdot  \left( 1 - \frac{2k(n-k)}{n(n-1)}\right) \, ,
\]
and the result follows. 
\end{proof}

\section{Concluding remarks}
\label{sec:3}

In this article, we report bounds on the tail probability \(\mathbb{P}(X_{P} \geq t)\) in the context of sampling without replacement, using basic tools from the theory of majorization. When \(t = 0\), we obtain a lower bound  which is of the form \(c \cdot \frac{k}{n} \cdot \sqrt{\frac{n-k}{nk}}\),  for some constant \(c<1\). Our bound is, of course, smaller than the bound \(\frac{k}{n}\) which is speculated by the MMS conjecture but has the advantage that it does not require any additional constraints on \(n\) and \(k\), thus  making it more widely applicable. Additionally, we establish a corresponding upper bound.  

When \(t > 0\), we obtain an upper bound that incorporates information on the absolute deviation of the population. 
To demonstrate its usefulness in scenarios where the absolute deviation is the only accessible information, we compare our bound with worst-case instances of certain refined versions of Theorem~\ref{thm_Hoeffding}, which have been reported  in~\cite{Bardenet_Maillard}. 
To ensure a fair comparison of the bounds, we express them in terms of sample averages. If we were to compare the bounds using a fixed threshold $t = k \varepsilon$, the value of \( \varepsilon \) would vary as \( k \) ranges from \( 1 \) to \( n \), making direct comparisons inconsistent. Hence, we present the results using sample averages: given a population \( P = (x_1, \ldots, x_n) \) and a sample size \( k \in [n-1] \), we select a subset \( \mathbb{I} \in \binom{[n]}{k} \) uniformly at random and define the sample average as \( A_P = \frac{1}{k} \sum_{i \in \mathbb{I}} x_i = \frac{1}{k} X_P \).  The bound to be compared are summarized in the following.

\begin{theorem}
\label{thms}
Let $P=(x_1,\ldots,x_n)$ be a population of size $n\ge 2$ such that $\sum_{i=1}^{n} x_i=0$, and let $k\in [n-1]$. 
\begin{enumerate}
\item (Bardenet--Maillard~\cite[Theorem~2.4]{Bardenet_Maillard}) \,  If\, $a = \min_{i\in [n]} x_i$\, and\, $b = \max_{i\in [n]} x_i$, then 
\begin{equation*}\label{Hoeff_bound_var}
\Prb(A_P \ge\varepsilon) \le \exp\left\{-\frac{2k\varepsilon^2}{(1-\frac{k}{n})(1+\frac{1}{k})(b-a)^2}\right\} \, , \, \text{ for } \, \varepsilon >0 \, .
\end{equation*}  
\item (Bardenet--Maillard~\cite[Theorem~3.5]{Bardenet_Maillard}) \, If\, $a = \min_{i\in [n]} x_i$ ,  $b = \max_{i\in [n]} x_i$, $\sigma^2 = \frac{1}{n}\sum_{i=1}^{n}x_{i}^2$, then for any $\varepsilon>0$ and $\delta\in [0,1]$, it holds 
\[
\Prb(A_P \ge\varepsilon) \le \exp\left\{-\frac{k\varepsilon^2/2}{\gamma^2 + (2/3)(b-a)\varepsilon}\right\}+\delta,
\]
where 
\begin{equation*}
\begin{aligned}
\gamma^2 = (1-f_k)\left(\frac{k+1}{k}\sigma^2+\frac{n-k-1}{k}c_{n-k-1(\delta)}\right),\\
c_{n-k-1}(\delta)=\sigma(b-a)\sqrt{\frac{2\log(1/\delta)}{n-k-1}}\,\, ,\quad f_{k}=\frac{k}{n}.
\end{aligned}    
\end{equation*}

\item (Theorem~\ref{main_thm2}, restated) \, If\, $\sum_{i=1}^{n}|x_i|=2\alpha$, for some $\alpha>0$, then 
\[
\Prb(A_P\ge \varepsilon)\, \le\, 1 - \min\left\{1\, ,\, \frac{\varepsilon k}{\alpha -\varepsilon k}\right\}\cdot\left(1 - \frac{2k(n-k)}{n(n-1)}\right) \, , \, \text{ for } \, \varepsilon\in (0,\alpha/k) \, .
\]
\end{enumerate}
\end{theorem}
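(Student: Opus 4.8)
The statement is just Theorem~\ref{main_thm2} rewritten in terms of the sample average, so the plan is simply to perform the change of variable and invoke the already-proven Theorem~\ref{main_thm2}. First I would recall that $A_P = \frac{1}{k} X_P$, so that for any threshold $\varepsilon$ the event $\{A_P \ge \varepsilon\}$ is identical to the event $\{X_P \ge k\varepsilon\}$. Hence $\Prb(A_P \ge \varepsilon) = \Prb(X_P \ge k\varepsilon)$, and one sets $t = k\varepsilon$ in Theorem~\ref{main_thm2}.

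Next I would check that the hypothesis $t \in (0,\alpha)$ of Theorem~\ref{main_thm2} translates exactly into the stated range for $\varepsilon$. Since $t = k\varepsilon$ and $k \ge 1$, the condition $0 < t < \alpha$ is equivalent to $0 < \varepsilon < \alpha/k$, i.e. $\varepsilon \in (0,\alpha/k)$, which is precisely the range appearing in the statement. The populations considered are the same (size $n\ge2$, zero sum, $\sum_i|x_i| = 2\alpha$), so all hypotheses of Theorem~\ref{main_thm2} are met.

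Finally I would substitute $t = k\varepsilon$ into the conclusion of Theorem~\ref{main_thm2}, which reads $\Prb(X_P \ge t) \le 1 - \min\{1, \frac{t}{\alpha - t}\}\cdot(1 - \frac{2k(n-k)}{n(n-1)})$. Replacing $t$ by $k\varepsilon$ gives
\[
\Prb(A_P \ge \varepsilon) = \Prb(X_P \ge k\varepsilon) \le 1 - \min\left\{1, \frac{k\varepsilon}{\alpha - k\varepsilon}\right\}\cdot\left(1 - \frac{2k(n-k)}{n(n-1)}\right),
\]
which is exactly the claimed bound. There is essentially no obstacle here: the only point requiring a line of care is confirming that the admissible range for $\varepsilon$ matches after the rescaling, and that $\alpha - k\varepsilon > 0$ on that range so the fraction is well-defined and positive. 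All the substantive work was done in the proof of Theorem~\ref{main_thm2} in Section~\ref{sec:2}; this restatement is a cosmetic reformulation to put it on the same footing as the Bardenet--Maillard bounds for the pictorial comparison in Section~\ref{sec:3}.
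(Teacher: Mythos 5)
Your proposal is correct and matches the paper's (implicit) treatment: items 1 and 2 are quoted from Bardenet--Maillard, and item 3 follows from Theorem~\ref{main_thm2} by the substitution $t = k\varepsilon$, with the range $t\in(0,\alpha)$ translating to $\varepsilon\in(0,\alpha/k)$ exactly as you describe.
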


As mentioned earlier, when the mean and the absolute deviation of the population, i.e., the parameter \( 2\alpha \), are the only two pieces of information available, the first two bounds in Theorem~\ref{thms} may assume worst-case values for the range \( b - a \) and the variance \( \sigma^2 \); thus we suppose that \(b-a = 2\alpha \) and \(\sigma^2= 2\alpha^2 \) in the first two bounds of Theorem~\ref{thms}. In our comparisons, we set \( n = 100 \) and \( \alpha = 1 \). Figure~\ref{fig:1} depicts the three bounds for \( \varepsilon = 0.001, 0.005, 0.01 \), and different values of the sample size $k$. As is shown in the figure, Theorem~\ref{main_thm2} provides a significantly tighter bound in this setting, thus demonstrating its advantage in instances where one has only access to the mean and the  absolute deviation of the population.

\begin{figure}[htbp]
    \centering
    \begin{subfigure}[b]{0.65\textwidth}
        \centering
        \includegraphics[width=\textwidth]{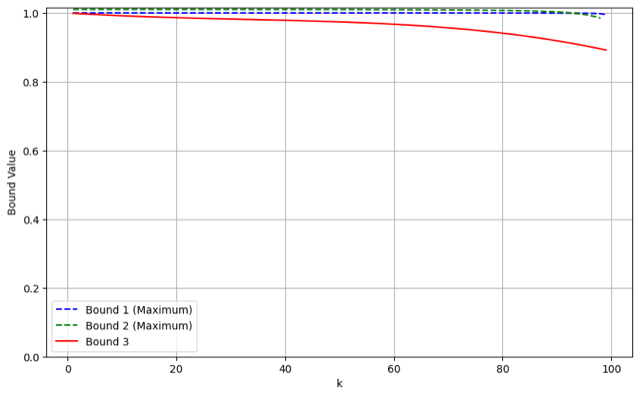} 
        \caption{$\varepsilon = 0.001$}
        \label{fig:1a}
    \end{subfigure}
    \hfill
    \begin{subfigure}[b]{0.65\textwidth}
        \centering
        \includegraphics[width=\textwidth]{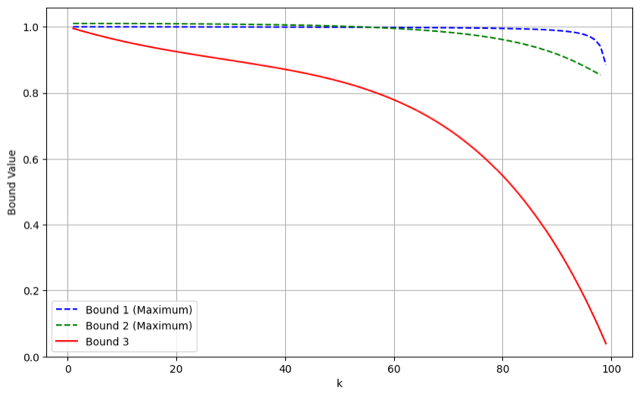} 
        \caption{$\varepsilon = 0.005$}
        \label{fig:1b}
    \end{subfigure}
    \hfill
    \begin{subfigure}[b]{0.65\textwidth}
        \centering
        \includegraphics[width=\textwidth]{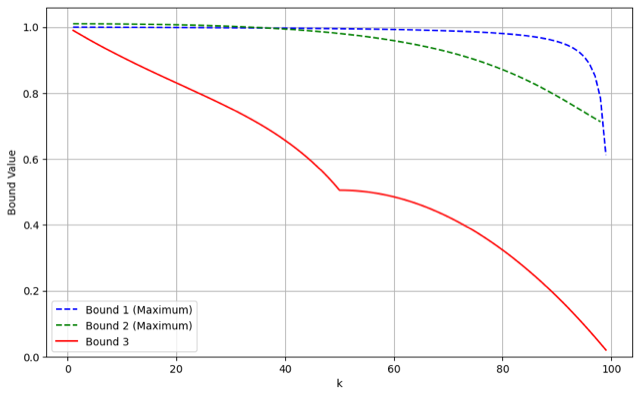} 
        \caption{$\varepsilon = 0.01$}
        \label{fig:1c}
    \end{subfigure}
    \caption{Comparison of the bounds for different values of $\varepsilon$}
    \label{fig:1}
\end{figure}
\newpage

\appendix
\section{A lower bound}
\label{appendix1}
We present a lower bound on $\Prb(X_{P}\geq t)$, for $t>0$, which applies to moderate values of the parameter $t>0$. 

\begin{proposition}\label{main_thm3}
Let $P=(x_1,\ldots,x_n)$ be a population of size $n\ge 2$ such that $\sum_{i=1}^{n} x_i=0$ and $\sum_{i=1}^{n} |x_i|=2\alpha$, for some $\alpha>0$.  
Let $X_P$ denote the sum of $k\in [n-1]$ elements that are sampled  without replacement from $P$, and let $t\in \left(0\, ,\,\frac{4k(n-k)}{n^2(n-1)}\cdot \alpha\right)$ be fixed. Then   
\[
\Prb(X_P\ge t)\, \ge\, \frac{2\alpha}{(\alpha-t)} \cdot \frac{k(n-k)}{n^2(n-1)} -  \frac{t}{2(\alpha-t)}  \, .
\]
\end{proposition}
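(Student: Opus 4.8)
The plan is to reduce Proposition~\ref{main_thm3} to a one-sided, Cantelli-type lower bound valid for every zero-mean random variable bounded in absolute value by $\alpha$, and then to bound $\mathrm{Var}(X_P)$ from below using the constraint $\sum_i|x_i|=2\alpha$.

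\textbf{Step 1: a Cantelli-type lower bound.} First I would verify the pointwise inequality
\[
x^2 \;\le\; \alpha t+(t-\alpha)\,x+2\alpha(\alpha-t)\cdot\mathbf{1}_{\{x\ge t\}}\,,\qquad x\in[-\alpha,\alpha]\,.
\]
On $[-\alpha,t)$ this rearranges to $(x-t)(x+\alpha)\le 0$, and on $[t,\alpha]$ it rearranges to $(x-\alpha)(x-t+2\alpha)\le 0$; both hold in the respective ranges. Since $X_P\in[-\alpha,\alpha]$ (because $\sum_i|x_i|=2\alpha$), substituting $x=X_P$ and taking expectations — the linear term disappears since $\Exp(X_P)=0$ — gives
\[
\Exp\!\big(X_P^{\,2}\big)\;\le\;\alpha t+2\alpha(\alpha-t)\cdot\Prb(X_P\ge t)\,,\qquad\text{hence}\qquad
\Prb(X_P\ge t)\;\ge\;\frac{\Exp(X_P^{\,2})-\alpha t}{2\alpha(\alpha-t)}\,.
\]

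\textbf{Step 2: lower bounding the variance.} As $\Exp(X_P)=0$ we have $\Exp(X_P^{\,2})=\mathrm{Var}(X_P)$, and a direct computation (using $\Prb(i\in\mathbb{I})=\tfrac{k}{n}$ and $\Prb(i,j\in\mathbb{I})=\tfrac{k(k-1)}{n(n-1)}$, together with $\sum_{i\ne j}x_ix_j=-\sum_i x_i^2$) yields $\mathrm{Var}(X_P)=\frac{k(n-k)}{n(n-1)}\sum_{i=1}^n x_i^2$. By Cauchy--Schwarz, $\sum_i x_i^2\ge\frac1n\big(\sum_i|x_i|\big)^2=\frac{4\alpha^2}{n}$ (equivalently, $\mathbf{x}\mapsto\sum_i x_i^2$ is Schur-convex, so it is minimized over $B_{\alpha}^{(n)}$ at a balanced population). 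Thus $\Exp(X_P^{\,2})\ge\frac{4\alpha^2 k(n-k)}{n^2(n-1)}$. Since $t<\alpha$, the bound from Step~1 is increasing in $\Exp(X_P^{\,2})$, so substituting this estimate gives
\[
\Prb(X_P\ge t)\;\ge\;\frac{\tfrac{4\alpha^2 k(n-k)}{n^2(n-1)}-\alpha t}{2\alpha(\alpha-t)}\;=\;\frac{2\alpha}{\alpha-t}\cdot\frac{k(n-k)}{n^2(n-1)}-\frac{t}{2(\alpha-t)}\,,
\]
which is exactly the claimed inequality; the hypothesis $t<\frac{4k(n-k)}{n^2(n-1)}\alpha$ is precisely the condition making this bound positive.

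The only step requiring a genuine idea is guessing the affine-plus-indicator majorant of $x^2$ in Step~1. It is found by forcing equality at $x\in\{-\alpha,t,\alpha\}$ — the support of the extremal measure in the moment problem ``maximize $\Exp(X^2)$ subject to $\Exp(X)=0$, $|X|\le\alpha$, and $\Prb(X\ge t)$ prescribed'' — which pins down the coefficients $\alpha t$, $t-\alpha$, $2\alpha(\alpha-t)$; the rest is routine. I note, in contrast to Theorems~\ref{main_thm} and~\ref{main_thm2}, that applying a concave majorant of $\mathbf{1}_{\{x\ge t\}}$ together with the majorant population $P^*=(\alpha,0,\ldots,0,-\alpha)$ only produces a trivial bound here, which is why the second-moment route is used.
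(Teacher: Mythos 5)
Your proof is correct, and the final bound matches the one in the statement exactly. Your Step~1 is, after the rescaling $x=\alpha Y$, $y=t/\alpha$, identical in substance to the paper's Lemma~\ref{y_lem2}: multiplying your pointwise inequality through by $\tfrac{1}{2\alpha(\alpha-t)}$ recovers precisely the quadratic minorant $f$ of $\mathbf{1}_{\{Y\ge y\}}$ with $f(-1)=f(y)=0$, $f(1)=1$ that the paper uses. Where you genuinely diverge is Step~2: the paper lower-bounds $\Exp(X_P^2)$ by invoking its majorization machinery --- Lemma~\ref{minimal} to find an extremal population $P_{i,\alpha}\prec P$, Lemma~\ref{cvx_lemma} for the convex function $f$, and then the variance of $\mathrm{Hyp}(n,i,k)$ together with $i(n-i)\le n^2/4$ --- whereas you compute $\mathrm{Var}(X_P)=\frac{k(n-k)}{n(n-1)}\sum_i x_i^2$ in closed form for an arbitrary zero-mean population and apply Cauchy--Schwarz to get $\sum_i x_i^2\ge 4\alpha^2/n$. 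Both routes land on the same sharp intermediate estimate $\Exp(X_P^2)\ge \frac{4\alpha^2 k(n-k)}{n^2(n-1)}$ (attained, in the limit, by the balanced population $P_{n/2,\alpha}$), so neither loses anything; your version is more elementary and self-contained, since it needs none of Lemmas~\ref{minimal} and~\ref{cvx_lemma}, while the paper's version has the virtue of running every result in the article through the same majorization pipeline. One small caveat on your parenthetical: the populations $P_{i,\alpha}$ of~\eqref{p_vector} are the minimal elements of $B_{\alpha}^{(n)}$ under majorization for each $i$, and for odd $n$ the Cauchy--Schwarz bound $4\alpha^2/n$ is an infimum rather than a minimum over $B_{\alpha}^{(n)}$ --- but this is immaterial, as the inequality is all you use.
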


\begin{lemma}\label{y_lem2}
Let $Y$ be a random variable such that $Y\in [-1,1]$, and let 
$y\in [0,1)$ be fixed. Let $f:[-1,1]\to [-1,1]$ be the function defined by 
\begin{equation*}
f(x)= \frac{1}{2(1-y)}\cdot x^2 + \frac{1}{2}\cdot x - \frac{y}{2(1-y)} \, .
\end{equation*}
Then 
\[
\Prb(Y\ge y) \ge \Exp(f(Y)) \, .
\]
\end{lemma}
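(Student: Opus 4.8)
The plan is to find a quadratic function $f$ on $[-1,1]$ that lies below the indicator $\mathbf{1}_{\{x \ge y\}}$, mimicking the linear construction used in Lemma~\ref{y_lem} but with a parabola instead of a line, so that applying the majorization tool (Lemma~\ref{cvx_lemma}) pushes the expectation in the favourable direction. The candidate is $f(x) = \frac{1}{2(1-y)} x^2 + \frac{1}{2} x - \frac{y}{2(1-y)}$. First I would verify the claimed pointwise inequality $\mathbf{1}_{\{Y\ge y\}} \ge f(Y)$ for all $x \in [-1,1]$: one checks that $f$ is a convex parabola, that $f(1) = \frac{1}{2(1-y)} + \frac{1}{2} - \frac{y}{2(1-y)} = \frac{1-y}{2(1-y)} + \frac{1}{2} = 1$, and that $f(y) = \frac{y^2}{2(1-y)} + \frac{y}{2} - \frac{y}{2(1-y)} = \frac{y^2 - y}{2(1-y)} + \frac{y}{2} = -\frac{y}{2} + \frac{y}{2} = 0$. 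So $f$ has its two relevant reference values $f(y) = 0$ and $f(1) = 1$ matching the jump of the indicator at $x=y$ and $x=1$. Since $f$ is convex, on the interval $[y,1]$ the graph of $f$ stays below the chord joining $(y,0)$ and $(1,1)$, which itself stays below the constant $1$ only if the chord is increasing — here I should instead argue directly: on $[y,1]$ we need $f(x) \le 1 = \mathbf{1}_{\{x\ge y\}}$, and on $[-1,y)$ we need $f(x) \le 0$. The second region is where convexity helps: $f(x) \le 0$ on $[-1,y)$ should follow from $f$ being convex with $f(y)=0$ and $f(-1) = \frac{1}{2(1-y)} - \frac{1}{2} - \frac{y}{2(1-y)} = \frac{1-y}{2(1-y)} - \frac{1}{2} = 0$, so $f$ vanishes at both endpoints $x=-1$ and $x=y$ of that subinterval and, being convex, is $\le 0$ between them. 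For the region $[y,1]$ one checks $f$ is increasing there (its vertex is at $x = -\frac{1}{2}\cdot\frac{2(1-y)}{1} \cdot \frac{1}{2}$... more simply $f'(x) = \frac{x}{1-y} + \frac12$, which is positive for $x \ge y \ge 0$ since then $\frac{x}{1-y} \ge 0$), hence $f(x) \le f(1) = 1$ on $[y,1]$. That establishes Lemma~\ref{y_lem2}.

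With Lemma~\ref{y_lem2} in hand, the proof of Proposition~\ref{main_thm3} proceeds exactly parallel to the proof of Theorem~\ref{main_thm2}, but now the inequality runs the other direction and we need a \emph{lower} bound on $\Exp(f(Y_P))$, which by Lemma~\ref{cvx_lemma} requires pushing $P$ \emph{up} in majorization order toward the maximal (most spread-out) population rather than down toward a minimal one — wait, that is backwards: Lemma~\ref{cvx_lemma} says $\Exp(f(X_{P_1})) \le \Exp(f(X_{P_2}))$ when $P_1 \prec P_2$ and $f$ convex, but here $f$ is \emph{convex}, not concave, so to lower-bound $\Exp(f(Y_P))$ I want a population majorized by $P$, i.e. a minimal one, namely $P_{i,\alpha}$ from~\eqref{p_vector} via Lemma~\ref{minimal}. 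So I would set $Y_P = \frac1\alpha X_P$, apply Lemma~\ref{y_lem2} with $y = t/\alpha$ to get $\Prb(X_P \ge t) = \Prb(Y_P \ge t/\alpha) \ge \Exp(f(Y_P))$, then use Lemma~\ref{minimal} to find $i \in [n-1]$ with $P_{i,\alpha} \prec P$, and Lemma~\ref{cvx_lemma} (with the convex $f$) to get $\Exp(f(Y_P)) \ge \Exp(f(Y_{P_{i,\alpha}}))$. By~\eqref{x_ip}, $Y_{P_{i,\alpha}} = \frac{n}{i(n-i)}(H_i - \frac{ik}{n})$ with $H_i \sim \mathrm{Hyp}(n,i,k)$, so $\Exp(f(Y_{P_{i,\alpha}}))$ expands into $\frac{1}{2(1-t/\alpha)}\Exp(Y_{P_{i,\alpha}}^2) + \frac12 \Exp(Y_{P_{i,\alpha}}) - \frac{t/\alpha}{2(1-t/\alpha)}$; since $\Exp(Y_{P_{i,\alpha}}) = 0$, this is $\frac{\alpha}{2(\alpha-t)}\Exp(Y_{P_{i,\alpha}}^2) - \frac{t}{2(\alpha-t)}$, and $\Exp(Y_{P_{i,\alpha}}^2) = \mathrm{Var}(X_{P_{i,\alpha}})/\alpha^2$.

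So the last computational step is to bound $\mathrm{Var}(X_{P_{i,\alpha}})$ from below, or rather to compute it exactly via the standard variance formula for sampling without replacement: for a zero-mean population with $\frac1n\sum x_j^2 = S^2$, one has $\mathrm{Var}(X_P) = k\cdot\frac{n-k}{n-1}\cdot S^2$. For $P_{i,\alpha}$ we have $\sum_j x_j^2 = i\cdot\frac{\alpha^2}{i^2} + (n-i)\cdot\frac{\alpha^2}{(n-i)^2} = \alpha^2(\frac1i + \frac{1}{n-i}) = \frac{\alpha^2 n}{i(n-i)}$, so $S^2 = \frac{\alpha^2}{i(n-i)}$ and $\mathrm{Var}(X_{P_{i,\alpha}}) = \frac{k(n-k)}{n-1}\cdot\frac{\alpha^2}{i(n-i)} \ge \frac{k(n-k)}{n-1}\cdot\frac{4\alpha^2}{n^2}$, using $i(n-i) \le n^2/4$. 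Therefore $\Exp(Y_{P_{i,\alpha}}^2) \ge \frac{4k(n-k)}{n^2(n-1)}$, and plugging in gives $\Exp(f(Y_{P_{i,\alpha}})) \ge \frac{\alpha}{2(\alpha-t)}\cdot\frac{4k(n-k)}{n^2(n-1)} - \frac{t}{2(\alpha-t)} = \frac{2\alpha}{\alpha-t}\cdot\frac{k(n-k)}{n^2(n-1)} - \frac{t}{2(\alpha-t)}$, which is exactly the claimed bound.

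The main obstacle, such as it is, is purely bookkeeping: getting the direction of the majorization inequality right (convex $f$ means we want the \emph{minimal} population $P_{i,\alpha}$, which is the opposite of what was needed in Theorem~\ref{main_thm2} where $f$ was concave and the maximal $P^* = (\alpha,0,\ldots,0,-\alpha)$ was used), and verifying the pointwise bound $f(x) \le \mathbf{1}_{\{x \ge y\}}$ carefully on both pieces of $[-1,1]$ — the key lucky fact that makes $f$ work is that it has a double-root structure, vanishing at both $x = -1$ and $x = y$, which is precisely why convexity forces $f \le 0$ on $[-1,y]$ and why the restricted range of $t$ (needed so the final bound is nonnegative and so $y = t/\alpha < 1$) is the true limitation on applicability.
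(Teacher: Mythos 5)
Your proof of Lemma~\ref{y_lem2} is correct and is essentially the paper's own argument: the paper likewise observes that $f$ is convex with $f(-1)=f(y)=0$ and $f(1)=1$ and concludes the pointwise bound $f(x)\le \mathbf{1}_{\{x\ge y\}}$, while you simply supply the details (convexity forces $f\le 0$ between the two roots $-1$ and $y$, and $f\le f(1)=1$ on $[y,1]$) that the paper leaves implicit. The additional material on Proposition~\ref{main_thm3} also matches the paper's proof, including the correct direction of the majorization step for a convex $f$.
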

\begin{proof}
Observe that the function $f$ is convex and satisfies $f(-1)=f(y)=0$ and $f(1)=1$. Hence it holds 
\[
\mathbf{1}_{\{Y\ge y\}} \ge f(Y) \, ,
\]
and the result follows upon taking expectations. 
\end{proof}

\begin{proof}[Proof of Proposition~\ref{main_thm3}]
Suppose that $P\in B_{\alpha}^{(n)}$, for some $\alpha>0$. 
Set $Y_P = \frac{1}{\alpha}\cdot X_P$ and observe that $Y_P\in [-1,1]$. Hence Lemma~\ref{y_lem2} yields 
\[
\Prb(X_P \ge -t) = \Prb\left(Y_P \ge -\frac{t}{\alpha}\right) \ge \Exp(f(Y_P)) \, ,
\]
where $f:[-1,1]\to [-1,1]$ with  
$f(x)= \frac{\alpha}{2(\alpha-t)}\cdot x^2 + \frac{1}{2}\cdot x - \frac{t}{2(\alpha-t)}$. 
Now Lemma~\ref{minimal} implies that $P_{i,\alpha}\prec P$, where $P_{i,\alpha}\in B_{\alpha}^{(n)}$ is the vector defined in~\eqref{p_vector}. 
Since the function $f$ is convex, it follows from Lemma~\ref{cvx_lemma} 
that $\Exp(f(Y_P)) \ge \Exp(f(Y_{P_{i,\alpha}}))$. 
Hence, we have  
\[
\Prb(X_P \ge t) \ge \Exp(f(Y_{P_{i,\alpha}})) \, . 
\]
Now we calculate 
\[
\Exp(f(Y_{P_{i,\alpha}})) = \frac{\alpha}{2(\alpha-t)}\cdot\frac{1}{\alpha^2} \Exp(X_{P_{i,\alpha}}^{2})  - \frac{t}{2(\alpha-t)} \, .
\]
From~\eqref{x_ip} we know that  $X_{P_{i,\alpha}}  = \frac{\alpha n}{i(n-i)} \left(H_{i} - \frac{ik}{n}\right)$, where 
$H_i\sim\text{Hyp}(n,i,k)$. Hence 
\[
\Exp(X_{P_{i,\alpha}}^{2}) = \left(\frac{\alpha n}{i(n-i)}\right)^2\cdot \text{Var}(H_i) =  \left(\frac{\alpha n}{i(n-i)}\right)^2\cdot k\cdot \frac{i}{n}\cdot \frac{n-i}{n}\cdot \frac{n-k}{n-1} \, , 
\]
and thus 
\[
\Exp(f(Y_{P_{i,\alpha}})) = \frac{\alpha}{2(\alpha-t)} \cdot \frac{k(n-k)}{i(n-i)(n-1)} -  \frac{t}{2(\alpha-t)} 
\ge  \frac{2\alpha}{(\alpha-t)} \cdot \frac{k(n-k)}{n^2(n-1)} -  \frac{t}{2(\alpha-t)} \, ,
\]
where the inequality follows from the fact that $i(n-i)\le n^2/4$. 
\end{proof}

\end{document}